\newcommand{\noi} {\noindent}
\theoremstyle{plain}
\newtheorem{thm}{Theorem}[section]
\theoremstyle{plain}
\newtheorem{lem}[thm]{Lemma}
\newtheorem{prop}[thm]{Proposition}
\theoremstyle{definition}
\newtheorem{rem}{Remark}[section]
\newtheorem{thmm}{Theorem}[section]
\newcommand{\De} {\Delta}
\newcommand{\la} {\lambda}
\newcommand{\rn}{\mathbb{R}^{N}}
\newcommand{\bn}{\mathbb{B}^{N}}
\newcommand{\authorfootnotes}{\renewcommand\thefootnote{\@fnsymbol\c@footnote}}%
\def\e{{\text{e}}}
\numberwithin{equation}{section} \allowdisplaybreaks
\begin{document}\title[Elliptic equations on hyperbolic space]
        {
        Existence of high energy positive solutions for a class of elliptic equations in the hyperbolic space}

  \author[Debdip Ganguly]{Debdip Ganguly}
\address{ Department of Mathematics, Indian Institute of Technology Delhi, Hauz Khas New Delhi 110016,  India}
\email{debdip@maths.iitd.ac.in}

\author[Diksha Gupta]{Diksha Gupta}
\address{ Department of Mathematics, Indian Institute of Technology Delhi, Hauz Khas New Delhi 110016,  India}
\email{dikshagupta1232@gmail.com}

\author[ K.~Sreenadh]{K.~Sreenadh}
\address{ Department of Mathematics, Indian Institute of Technology Delhi, Hauz Khas New Delhi 110016,  India}
\email{sreenadh@maths.iitd.ac.in}

    \date{\today}
\subjclass[2010]{Primary: 35J20, 35J60, 58E30}
\keywords{Hyperbolic space, hyperbolic bubbles, Palais-Smale decomposition, semilinear elliptic problem}

   \begin{abstract}
We study the existence  of positive solutions for the following class of scalar field problem on the hyperbolic space
\begin{equation*}
		-\Delta_{\mathbb{H}^N} u - \lambda u = a(x) |u|^{p-1} \, u\;\;\text{in}\;\mathbb{B}^{N}, \quad
		u \in H^{1}{(\mathbb{B}^{N})},
\end{equation*}
where $\mathbb{B}^N$ denotes the hyperbolic space,  $1<p<2^*-1:=\frac{N+2}{N-2}$, if $N \geqslant 3; 1<p<+\infty$, if $N = 2,\;\lambda < \frac{(N-1)^2}{4}$, and $0< a\in L^\infty(\mathbb{B}^N).$ We prove the existence of a positive solution by introducing the min-max procedure  in the spirit of Bahri-Li in the hyperbolic space and using a series of new estimates involving 
{\it  interacting hyperbolic bubbles. \rm}
\end{abstract}

\maketitle

\section{Introduction}
\noi In this paper, we investigate the existence of positive solutions for the following semilinear elliptic problem on the hyperbolic space $\mathbb{B}^{N}$
\begin{equation*}
(P_\la) \;\; \left\{ -\Delta_{\mathbb{B}^{N}} u \, - \, \lambda u \, = \, a(x) \, |u|^{p-1} u, \quad u \in H^{1}\left(\mathbb{B}^{N}\right), \right.
\end{equation*}
where  $1<p<2^*-1:=\frac{N+2}{N-2}$, if $N \geqslant 3; 1<p<+\infty$, if $N = 2,\;\lambda < \frac{(N-1)^2}{4},$ $H^{1}\left(\mathbb{B}^{N}\right)$ denotes the Sobolev space 
on the disc model of the hyperbolic space $\mathbb{B}^{N},$ $\Delta_{\mathbb{B}^{N}}$ denotes the Laplace Beltrami operator on $\mathbb{B}^{N},$ $\frac{(N-1)^2}{4}$ 
being the bottom of the $L^2-$ spectrum of $-\Delta_{\bn},$ and $a(x) \in L^{\infty}(\bn).$ Moreover, we investigate the existence of solutions under the following hypotheses
\medskip

\begin{enumerate}
\item[(a1)] $ a(x) > 0, \forall \ x \in \mathbb{\bn}$ \ and \   $\displaystyle \lim_{d(x, 0) \rightarrow \infty} a(x)  =  1,$
 \item [(a2)] there exist some positive constants $C$ and $\delta > 0$ such that
	$$a(x) \geqslant 1 -\operatorname{C \, exp}(-\delta \, d(x,0))  \quad \forall \ d(x, 0) \rightarrow \infty,$$
\end{enumerate}
\noi where $d$ is the hyperbolic distance. 

 \noi The problem, $(P_\la)$ when posed in all of $\mathbb{R}^N,$ with $\lambda = -1,$ has been the subject of intense research in the past few decades, starting from the seminal 
papers by Berestycki-Lions \cite{BL1, BL2}, Bahri-Berestycki \cite{BB1}, Bahri-Li \cite{Bahri-Li}, Bahri-Lions \cite{Bahri}. Apart from purely mathematical interest, such semilinear elliptic equations  also arise in several 
physical phenomena, e.g., nonlinear Schr\"odinger and Klein-Gordan equations (see \cite{FW, YGO1, YGO2}). The main difficulty in studying such equations
 in all of $\mathbb{R}^N$ comes from the lack of compactness due to the  unboundedness of the domain $\mathbb{R}^N,$ i.e., through the vanishing of mass in the sense of concentration compactness of Lions \cite{Lions}. Non-compact subcritical problems have been studied thoroughly and brought to a high level of sophistication by many authors.  We refer to the papers \cite{AD, Adachi, AD2, CP, CW,  DI, DN,  AM, MMP, MP1} and references  quoted therein without any claim of completeness. The asymptotic of $a(x)$ in $(a1)$ suggests that the corresponding ``limiting problem," i.e., 
 \begin{equation}\label{euc}
 -\Delta_{\mathbb{R}^N}u \, + \, u \, = \, u^p \quad \mbox{in} \ \mathbb{R}^N, \quad u \in H^{1}(\mathbb{R}^N), \quad u > 0  \quad \mbox{in} \ \mathbb{R}^N,
 \end{equation}
will play an essential role. In fact, the proof of the existence of positive solutions  introduced by Bahri-Li in \cite{Bahri-Li} essentially depends on the uniqueness (up to translation) and decay 
estimate of the unique solution, and it is worth mentioning that Bahri-Li's  solution to the equation $(P_\la)$ does not correspond to the mountain pass critical point. More precisely, consider 
the energy functional $E_a : H^1(\mathbb{R}^N) \rightarrow \mathbb{R}$ defined by 
$$
E_a(u) := \frac{\int_{\mathbb{R}^N} \left( |\nabla u|^2) \, + \,  u^2 \right) \, {\rm d}x}{\left( \int_{\mathbb{R}^N} a(x) \, |u|^{p+1} \, {\rm d}x \right)^{\frac{2}{p+1}}}.
$$
and  it was shown in \cite{Bahri-Li} that the Palais-Smale condition holds for $E_a$ in the range $(S_1, 2^{\frac{p-1}{p+1}} S_1),$ where 
$$
S_1 :=  \inf_{u \in H^1(\mathbb{R}^N)\backslash\{0\}}\,  E_a(u). 
$$
 We note that if $a(x)\ge 1$ in $\mathbb R^N,$ then the mountain pass level for $E_a$ is strictly less than $S_1$ and hence achieved (see \cite{AD}). But assumptions (a1) and 
	(a2), in general, do not ensure  $a(x) \ge 1.$ So Bahri-Li constructed solutions in higher energy level by using  energy estimates involving interacting (translated) solutions to the limiting problem.
\medskip 

\noi As concerns the analogous equation in the hyperbolic space, Sandeep-Mancini, in their seminal paper \cite{MS}, have shown 
  the existence and uniqueness of finite energy positive solutions of the following homogeneous elliptic equation: 
\begin{equation}\label{hsm}
-\Delta_{\mathbb{B}^{N}} u \, - \, \lambda u \, = \, u^{p}, \quad u \in H^{1}\left(\mathbb{B}^{N}\right),
\end{equation}
where $\lambda \leq \frac{(N-1)^2}{4},$ $1 < p \leq \frac{N+2}{N-2}$ if $N \geq 3; $ $1 < p < \infty$ if $N=2.$ Among many other results, they established in the subcritical 
case, and for $p > 1,$ if $N =2$ and $1 < p < 2^{\star} -1$ if $N \geq 3,$  the problem \eqref{hsm} has a positive solution if and only if $\lambda < \frac{(N-1)^2}{4}.$ These 
positive solutions are also shown to be unique up to hyperbolic isometries, except possibly for $N =2$ and $\lambda > \frac{2(p+1)}{(p+3)^2}.$ The question of finite energy solutions was fully resolved in \cite{MS} and subsequently, the existence of sign-changing solutions, compactness, and non-degeneracy were studied in \cite{BS, DG1, DG2}
and on the other hand, authors in \cite{BFG, BGGV}  showed the existence of infinite energy solutions of \eqref{hsm} and determined the exact asymptotic behavior of wide classes of finite and infinite energy solutions.

\medskip 
\noi One should notice that for the existence of finite energy solutions, one looks for the energy functional associated with \eqref{hsm} which is  defined as 

$$
E_{\lambda} (u) = \frac{1}{2} \int_{\bn} \left( |\nabla_{\bn} u|^2 - \lambda u^2 \right) \, \mathrm{~d} V_{\mathbb{B}^{N}} \, - \, \frac{1}{p+1} \int_{\bn} |u|^{p+1} \, \mathrm{~d} V_{\mathbb{B}^{N}}.
$$
In the subcritical case, the  variational problem lacks compactness because of the {\it hyperbolic translation\rm} (see section~\ref{SecPS} for more details), and so it cannot 
be solved by the standard minimization method.  Moreover, in \cite{BS}, a detailed analysis of the Palais-Smale decomposition is performed. One can easily see that if $U$ is 
a solution of \eqref{hsm}, then
\[u := U\circ\tau, \quad \mbox{for} \ \tau \in I(\bn),
\]
\noi where $I(\bn)$ is the group of isometries on the hyperbolic space,   is also a solution, and hence if we define a sequence by varying $\tau_n\in I(\bn),$ then $u_n$ is also a  Palais-Smale (PS) sequence for $E_{\lambda}.$ In fact, it was shown in 
\cite[Theorem~3.3]{BS} that in the subcritical case, i.e., when $1< p < \frac{N+2}{N-2},$ non-compact PS sequences
 are made of finitely many sequences of the form 
$u_n.$ We call this $u$ a {\it hyperbolic bubble \rm}(See Section~\ref{SecPS} below for more details). 
In our analysis, these {\it hyperbolic bubbles \rm}will play a major role. 
\medskip 

\noi Inspired by the above mentioned papers, our main goal in this article is to study whether positive solutions can still exist for a perturbed problem as in $(P_\la).$  
Moreover, we seek solutions in the {\it higher energy \rm}range following  Bahri-Li in 
\cite{Bahri-Li}. In particular, we prove the following theorem. 

\medskip

\begin{thmm}\label{maintheorem1} Assume $a(x)$ satisfies  (a1)- (a2). Then $(P_\la)$ admits a  positive solution for all $\la$ in the range
	\[\la \in \begin{cases}  \left( -\infty, \frac{2(p+1)}{(p+3)^2}\right], & N=2, \\ \left(-\infty, \frac{(N-1)^2}{4} \right), & N\ge 3.\end{cases}\]
\end{thmm}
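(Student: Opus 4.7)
The plan is to adapt the min--max scheme of Bahri--Li \cite{Bahri-Li} to the hyperbolic setting, with the hyperbolic bubble $\mathcal W$ of Mancini--Sandeep \cite{MS} replacing the Euclidean ground state, the isometry group $I(\bn)$ replacing Euclidean translations, and the Palais--Smale decomposition of \cite{BS} replacing Lions' concentration--compactness. I would work with the Rayleigh quotient
\[
Q_a(u)\;=\;\frac{\int_{\bn}\bigl(|\nabla_{\bn} u|^2-\lambda u^2\bigr)\,\mathrm{d}V_{\bn}}{\bigl(\int_{\bn} a(x)\,|u|^{p+1}\,\mathrm{d}V_{\bn}\bigr)^{2/(p+1)}}
\]
on $H^1(\bn)\setminus\{0\}$; critical points of $Q_a$, after rescaling, solve $(P_\lambda)$. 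Set $S_\infty:=\inf Q_\infty=Q_\infty(\mathcal W)$, which is achieved by the unique positive hyperbolic bubble in exactly the $\lambda$--range specified by the theorem. The first step is a Palais--Smale analysis: by (a1) and \cite[Theorem~3.3]{BS}, every non-compact $(PS)_c$ sequence for $Q_a$ decomposes into $k\ge 1$ escaping hyperbolic bubbles plus a (possibly zero) weak limit; since $a(x)\to 1$ at infinity, each escaping bubble contributes at least $S_\infty^{(p+1)/(p-1)}$ to the rescaled action, so noncompactness is excluded on the range $c\in(S_\infty,\,2^{(p-1)/(p+1)}S_\infty)$, where $Q_a$ satisfies $(PS)_c$.

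Next I would construct a min--max class whose level lies strictly in this window. For $y\in\bn$, let $\tau_y\in I(\bn)$ send $0$ to $y$ and set $\mathcal W_y:=\mathcal W\circ\tau_y^{-1}$. Given two points $y,y'\in\bn$, consider the two-parameter test family
\[
u_{s,t}\;=\;s\,\mathcal W_y+t\,\mathcal W_{y'},\qquad (s,t)\in Q_T:=[0,T]^2,\ T\gg 1,
\]
and the Bahri--Li linking value
\[
c_\ast\;=\;\inf_{h\in\mathcal H}\,\max_{(s,t)\in Q_T}\,Q_a\bigl(h(s,t)\bigr)
\]
over continuous deformations $h$ fixing $\partial Q_T$. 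A standard linking argument against the one-bubble manifold yields $c_\ast>S_\infty$; the whole theorem then reduces to the strict upper bound
\[
\max_{(s,t)\in Q_T}\,Q_a(u_{s,t})\;<\;2^{(p-1)/(p+1)}\,S_\infty
\]
for a suitable choice of $y,y'$.

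The decisive step, and the principal obstacle, is the sharp interaction estimate. Using the asymptotic
\[
\mathcal W(x)\;\sim\;C_\lambda\,d(x,0)^{-(N-1)/2}\,e^{-c(\lambda)\,d(x,0)},\qquad c(\lambda)=\tfrac{N-1}{2}+\sqrt{\tfrac{(N-1)^2}{4}-\lambda},
\]
from \cite{MS, BFG, BGGV}, together with the exponential volume growth on $\bn$, one computes the cross--interaction
\[
\int_{\bn}\mathcal W_y^{p}\,\mathcal W_{y'}\,\mathrm{d}V_{\bn}\;\asymp\;R^\beta\,e^{-c(\lambda)R},\qquad R:=d(y,y'),
\]
for an explicit exponent $\beta$. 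Expanding $Q_\infty(u_{s,t})$ around the maximizer $(s,t)=(1,1)$ gives
\[
\max_{s,t}\,Q_\infty(u_{s,t})\;=\;2^{(p-1)/(p+1)}\,S_\infty\;-\;c_1\,R^{\beta}\,e^{-c(\lambda)R}\bigl(1+o(1)\bigr),
\]
a strict downward gap of the same order as the interaction. The perturbation from $Q_\infty$ to $Q_a$ is controlled via (a2) by
\[
\bigl|Q_a(u_{s,t})-Q_\infty(u_{s,t})\bigr|\;\lesssim\;\int_{\bn}(1-a)_+\bigl(\mathcal W_y^{p+1}+\mathcal W_{y'}^{p+1}\bigr)\mathrm{d}V_{\bn}\;\lesssim\;e^{-\delta\,\min(d(y,0),d(y',0))}.
\]
The hard part is to position $y,y'$ so that this perturbation is strictly dominated by the interaction gap: the natural strategy is to keep the mutual distance $R$ moderate while pushing both centres far from $0$, thereby decoupling the two scales and making the $(a-1)$--error exponentially smaller than $R^{\beta}e^{-c(\lambda)R}$, which yields the required strict inequality. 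Once this is achieved, $c_\ast$ is a critical value of $Q_a$, and rescaling produces a nontrivial solution $u$ of $(P_\lambda)$; positivity is recovered by restricting $\mathcal H$ to nonnegative test functions (via $v\mapsto|v|$) and applying the strong maximum principle for $-\Delta_{\bn}-\lambda$ on $\bn$. The $N=2$ threshold $\lambda\le 2(p+1)/(p+3)^2$ is precisely the range of uniqueness of $\mathcal W$ in \cite{MS}, which is what makes the single--bubble structure in the Palais--Smale decomposition valid and the whole scheme go through.
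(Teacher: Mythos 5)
Your overall skeleton does match the paper's: Palais--Smale decomposition giving compactness of the quotient in the window $(S_{1,\la},S_{2,\la})$ with $S_{2,\la}=2^{(p-1)/(p+1)}S_{1,\la}$, sharp interaction estimates for two bubbles whose mutual distance is moderate while both centers are pushed far from the origin, and a min--max level squeezed strictly between the two thresholds. The genuine gap is the strict lower bound $c_\ast>S_{1,\la}$, which you dismiss as ``a standard linking argument against the one-bubble manifold''. As stated, your construction cannot deliver it: $Q_a$ is homogeneous of degree zero, so on your family $u_{s,t}=s\,\mathcal W_y+t\,\mathcal W_{y'}$ the value $Q_a(u_{s,t})$ depends only on the ratio $t/s$; the square $[0,T]^2$ degenerates to the segment of convex combinations, its boundary already carries the full range of values of the family, and there is no visible topological obstruction preventing an admissible deformation (fixing $\partial Q_T$) from pushing the maximum down to $S_\infty$ --- exactly the level where (PS) fails. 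This lower bound is precisely where Bahri--Li, and the paper, have to work: the min--max is taken over maps $h:B(0,R_2)\subset\bn\to\Sigma^+$ parametrized by bubble \emph{centers}, with boundary data $h_0(x_1)$ the normalized bubble at $x_1\in\partial B(0,R_2)$, and the bound $\mu_0\ge I_z>S_{1,\la}$ is forced by a center-of-mass map $m:\Sigma\to B(0,1)$ (suitably defined on the hyperbolic space) together with Brouwer degree, $\deg(m\circ h,B(0,R_2),z)=1$, so every admissible $h$ must meet the fiber $\{m(u)=z\}$ on which $J\ge I_z$. Moreover $I_z>S_{1,\la}$ is not automatic: the paper first disposes of the alternatives $\inf_\Sigma J<S_{1,\la}$ and $I_z=S_{1,\la}$ by direct minimization/concentration-compactness, and only then runs the degree argument. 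Without the barycenter-plus-degree mechanism (or an equivalent) and this case analysis, your level $c_\ast$ could a priori equal $S_\infty$ and the scheme stalls.

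Two secondary points, both repairable. Your perturbation bound $\int_{\bn}(1-a)_+\mathcal W_y^{p+1}\,\mathrm{d}V_{\bn}\lesssim e^{-\delta\,d(y,0)}$ glosses over the exponential volume growth $e^{(N-1)r}$, which is exactly where the paper must split the integral into regions and calibrate $\delta$, $\varepsilon$ against $c(N,\la)$, $p+1$ and $N-1$ before comparing with the interaction term; and you do not need the two-sided asymptotic $\int_{\bn}\mathcal W_y^{p}\mathcal W_{y'}\asymp R^{\beta}e^{-c(\la)R}$ --- a one-sided lower bound of the form $e^{-(c(N,\la)+\varepsilon)d(y,y')}$, combined with the convexity inequality $(a+b)^{p+1}\ge a^{p+1}+b^{p+1}+p(a^pb+ab^p)$, already yields $J(t u_1+(1-t)u_2)<S_{2,\la}$ uniformly in $t\in[0,1]$, which is all the upper bound requires.
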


%
%
%
%
%

%


\medskip

\subsection{Main Novelty and the strategy of the Proof}

As discussed earlier, we follow the approach of Bahri-Li. We first show that the energy functional associated with $(P_\la)$ satisfies the Palais-Smale condition in the range 
$(S_{1, \lambda}, S_{2, \lambda})$ where $S_{i, \lambda},\;i = 1, 2$ are defined in \eqref{3c}, and a detailed PS decomposition is provided in Proposition~\ref{prop1}. The proof of  proposition~\ref{prop1} is a starightforward adaptation of \cite[Theorem~3.3]{BS}  to the problem  $(P_\la)$. In the next step, we prove the key energy estimates. 
The main novelty lies in this step, where we compute the energy estimates involving the convex combination of translated {\it hyperbolic bubbles \rm} with two different centers. To this end,  
we need precise estimates on the interacting {\it hyperbolic bubbles, \rm} and since the hyperbolic volume grows exponentially, i.e., 

$$
 \mathrm{~d} V_{\mathbb{B}^{N}} \asymp e^{(N-1) r}, \quad r \rightarrow \infty,
$$
where $r$ denotes the geodesic distance $d(x, 0)$, and $\mathrm{~d} V_{\mathbb{B}^{N}}$ denotes the hyperbolic volume form, and 
to compensate for the exponential volume growth of the hyperbolic space, one requires a new way to tackle the integrals involving interacting bubbles. Unlike in the Euclidean space, 
where solutions to \eqref{euc} decays exponentially and the euclidean volume grows polynomially. We compute the effect of interacting bubbles in the 
integral in a novel way by breaking the integral in different sub-regions, and using the precise asymptotic estimate on the solution of \eqref{hsm} obtained in \cite{MS}.
We establish a series of new estimates for interacting terms, which are crucial in proving the corresponding energy of the
convex combination is strictly less than $S_{2, \lambda}.$ Finally, the existence of a positive solution is proved using the {\it min-max \rm} procedure of Bahri-Li by suitably defining the 
center of mass for functions in $H^1(\bn)$ on the hyperbolic space. We refer section~\ref{pf} below for more details.

\medskip

\noi The paper is organized as follows: In Section~\ref{sec2}, we introduce some of the notations and some geometric definitions and preliminaries concerning hyperbolic space.
In Section~\ref{SecPS}, we state and prove the Palais-Smale decomposition theorem associated with $(P_\la)$. Section~\ref{keylemma} contains the proof of key estimates 
on the energy, while Section~\ref{pf} contains the proof of the main Theorem~\ref{maintheorem1}.


 \section{Notations and Functional Analytic Preliminaries}\label{sec2}

\noi In this section, we will introduce some of the notations and definitions used in this
 paper and also recall some of the embeddings
  related to the Sobolev space on the hyperbolic space.  
  
  \medskip

\noi We will denote by $\bn$ the disc model of the hyperbolic space, i.e., the unit disc
 equipped with 
 the Riemannian metric $g_{\bn} := \sum\limits_{i=1}^N \left(\frac{2}{1-|x|^2}\right)^2 \, {\rm d}x_i^2$. To simplify our notations, we will denote $g_{\bn}$
by $g$.
 The corresponding volume element is given by $\mathrm{~d} V_{\mathbb{B}^{N}} = \big(\frac{2}{1-|x|^2}\big)^N {\rm d}x, $ where ${\rm d}x$ denotes the Lebesgue 
measure on $\rn$.  

\medskip 

\noi {\bf Hyperbolic distance on $\bn$.} The hyperbolic distance between two points $x$ and $y$ in $\bn$ will be denoted by $d(x, y).$ For the hyperbolic distance between
$x$ and the origin we write 

$$
\rho := \, d(x, 0) = \int_{0}^{r} \frac{2}{1 - s^2} \, {\rm d}s \, = \, \log \frac{1 + r}{1 - r},
$$
where $r = |x|$, which in turn implies that  $r = \tanh \frac{\rho}{2}.$ Moreover, the hyperbolic distance between $x, y \in \bn$ is given by 

$$
d(x, y) = \cosh^{-1} \left( 1 + \dfrac{2|x - y|^2}{(1 - |x|^2)(1 - |y|^2)} \right).
$$
It easily follows that a subset $S$ of $\bn$ is a hyperbolic sphere in $\bn$ iff $S$ is a Euclidean sphere in $\mathbb{R}^N$ and contained in $\bn$, probably 
with a different center and different radius, which can be computed. Geodesic balls in $\bn$ of radius $a$ centered at the origin will be denoted by

$$
B(0, a) : = \{ x \in \bn : d(x, 0) < a \}.
$$

\noi We also need some information on the isometries of $\bn$. Below we recall the
definition of a particular type of isometry, namely the hyperbolic translation. For
more details on the isometry group of $\bn$, we refer \cite{RAT}.

\medskip 

\noi {\bf Hyperbolic Translation.} For $b \in \bn,$ define 

\begin{equation}\label{hypt}
\tau_b(x) = \dfrac{(1 - |b|^2)x + (|x|^2 + 2x.b + 1)b}{|b|^2 |x|^2 + 2x.b + 1},
\end{equation}
then $\tau_b$ is an isometry of $\bn$ with $\tau_b(0) = b.$ The map 
$\tau_b$ is called the hyperbolic translation of $\bn$ by $b.$ It can also be seen that 
$\tau_{-b} = \tau_b^{-1}.$

\medskip

 \noi The hyperbolic gradient $\nabla_{\bn}$ and the hyperbolic Laplacian $\De_{\bn}$ are
 given by
 \begin{align*}
  \nabla_{\bn}=\left(\frac{1-|x|^2}{2}\right)^2\nabla,\ \ \ 
 \De_{\bn}=\left(\frac{1-|x|^2}{2}\right)^2\De + (N-2)\frac{1 - |x|^2}{2} \, \langle x, \nabla \,\rangle.
 \end{align*}

\medskip

\noi{\bf A sharp Poincar\'{e}-Sobolev inequality.} (see \cite{MS})

\medskip

\noi{\bf Sobolev Space :} We will denote by ${H^{1}}(\bn)$ the Sobolev space on the disc
model of the hyperbolic space $\bn$, equipped with norm $\|u\|=\left(\int_{\mathbb{B}^N} |\nabla_{\mathbb{B}^{N}} u|^{2}\right)^{\frac{1}{2}},$
where  $|\nabla_{\bn} u| $ is given by
$|\nabla_{\bn} u| := \langle \nabla_{\bn} u, \nabla_{\bn} u \rangle^{\frac{1}{2}}_{\bn} .$ \\

For $N \geq 3$ and every $p \in \left(1, \frac{N+2}{N-2} \right]$ there exists an optimal constant 
$S_{N,p} > 0$ such that
\begin{equation*}
 S_{N,p} \left( \int_{\mathbb{B}^{N}} |u|^{p + 1} \mathrm{~d} V_{\mathbb{B}^{N}} \right)^{\frac{2}{p + 1}} 
\leq \int_{\mathbb{B}^N} \left[|\nabla_{\mathbb{B}^{N}} u|^{2}
 - \frac{(N-1)^2}{4} u^{2}\right] \, \mathrm{~d} V_{\mathbb{B}^{N}},
\end{equation*}
for every $u \in C^{\infty}_{0}(\mathbb{B}^{N}).$ If $ N = 2$, then any $p > 1$ is allowed.

\noi A basic information is that the bottom of the spectrum of $- \Delta_{\bn}$ on $\bn$ is 
\begin{equation}\label{firsteigen}
  \frac{(N-1)^2}{4} = \inf_{u \in H^{1}(\bn)\setminus \{ 0 \}} 
\dfrac{\int_{\bn}|\nabla_{\bn} u|^2 \, \mathrm{~d} V_{\mathbb{B}^{N}} }{\int_{\bn} |u|^2 \, \mathrm{~d} V_{\mathbb{B}^{N}}}. 
\end{equation}

\begin{rem}
A  consequence of \eqref{firsteigen} is that if $\lambda < \frac{(N-1)^2}{4},$ then

$$
||u||_{H_{\lambda}} := ||u||_{\lambda} := \left[ \int_{\bn} \left( |\nabla_{\bn} u|^2 - \lambda \, u^2 \right) \, \mathrm{~d} V_{\mathbb{B}^{N}} \right]^{\frac{1}{2}}, \quad u \in C_c^{\infty}(\bn)
$$
is a norm, equivalent to the $H^1(\bn)$ norm and the corresponding inner product is given by $\langle u, v\rangle_{H_{\lambda}}.$

\end{rem}


\section{Palais-Smale characterization}\label{SecPS}

\noi In this section, we study the Palais-Smale sequences (PS sequences) corresponding to the problem $(P_\la)$.
To be precise, define the associated energy functional $I_{\lambda, a}$ as
\begin{equation*}
I_{\lambda,a}(u)=\frac{1}{2} \int_{\mathbb{B}^{N}}\left[\left|\nabla_{\mathbb{B}^{N}} u\right|_{\mathbb{B}^{N}}^{2}-\lambda u^{2}\right] \mathrm{~d} V_{\mathbb{B}^{N}}-\frac{1}{p+1} \int_{\mathbb{B}^{N}}a(x)|u|^{p+1} \mathrm{~d} V_{\mathbb{B}^{N}}. 
\end{equation*}
We say a sequence $u_{n} \in H^{1}\left(\mathbb{B}^{N}\right)$ is a Palais-Smale sequence for $I_{\lambda,a}$ at a level $d$ if $I_{\lambda,a}\left(u_{n}\right) \rightarrow d$ and $I_{\lambda,a}^{\prime}\left(u_{n}\right) \rightarrow 0$ in $H^{-1}\left(\mathbb{B}^{N}\right) .$ One can easily see that PS sequences are bounded. 
Throughout this section, we assume $a(x) \rightarrow 1$ as $d(x,0) \rightarrow \infty$.

\medskip

\subsection{Compactness and Palais-Smale sequences} Before introducing the Palais-Smale characterization, we shall remark on the compactness properties of $(P_\la)$ posed in the hyperbolic space. To see this, let $u \in H^{1}\left(\mathbb{B}^{N}\right)$ and 
$b_n \in \bn$ such that $b_n \rightarrow \infty$ and let $\tau_n$ be the hyperbolic translation given by \eqref{hypt} such that $\tau_n(0) = b_n.$ Define $u_n = u \circ \tau_n,$
then $||u_n||_{H^{1}\left(\mathbb{B}^{N}\right)} = ||u||_{H^{1}\left(\mathbb{B}^{N}\right)}$ but $u_n \rightharpoonup 0$ weakly in $H^1(\bn).$ This shows that the embedding $H^1(\bn) \hookrightarrow L^p(\bn)$ is not compact for any $2 < p < \frac{2N}{N-2}$ even in the 
subcritical case. 

\medskip 

\noi It is worth noticing that if $v$ is a solution of $(P_\la)$ with $a(x) \equiv 1,$ then $v\circ \tau$ is also a solution for any $\tau \in I(\bn),$ where $I(\bn)$ is an isometry group.  In particular, if we define 

$$
v_n = v\circ \tau_n,
$$
where $\tau_n $ as defined in \eqref{hypt} with $\tau_n (0) \rightarrow \infty,$ then $v_n$ is a PS sequence converging weakly to zero. 
Thus in the limiting case, i.e., $a(x) \rightarrow 1,$ as $d(x, 0) \rightarrow \infty,$ the functional $I_{\lambda, a}$ for the Palais-Smale sequences 
exhibits sequences of the form $v_n.$ To be precise, we state the following proposition.

\medskip

\begin{prop}\label{prop1}
Let $\left\{u_{n}\right\} \subset H^{1}\left(\mathbb{B}^{N}\right)$ be a PS sequence for $I_{\lambda,a}$ at a level $d\geq 0.$ Then there exists a subsequence (still denoted by $\left\{u_{n}\right\}$) for which the following holds:
there exists an integer $m \geq 0$, sequences $\tau_{n}^{i}$ $\in I\left( \mathbb{B}^{N}\right)$, functions $\bar{u}, w_{i}\in H^1(\bn)$ for $1 \leq i \leq m$ such that
\begin{equation*}
-\Delta_{\mathbb{B}^{N}} \bar{u}-\lambda \bar{u}=a(x)|\bar{u}|^{p-1} \bar{u} \; \text{in}\; H^{-1}(\bn),
\end{equation*}

\begin{equation*}
\begin{gathered}
-\Delta_{\mathbb{B}^{N}} w_{i}-\lambda w_{i} = |w_{i}|^{p-1}w_i\; \text{in}\; H^{-1}(\bn), w_{i} \not \equiv 0, 
\end{gathered}
\end{equation*}
\begin{equation*}
\begin{gathered}
u_{n}-\left(\bar{u}+\sum_{i=1}^{m} w_{i}\left(\tau_{n}^{i}(\bullet)\right)\right) \rightarrow 0 \text { as } n \rightarrow \infty, \\
I_{\lambda,a}\left(u_{n}\right) \rightarrow I_{\lambda,a}(\bar{u})+\sum_{i=1}^{m} I_{\lambda,1}\left(w_{i}\right) \text { as } n \rightarrow \infty, \\
\tau_{n}^{i}(0) \rightarrow \infty,\; d(\tau_{n}^{i}(0),\tau_{n}^{j}(0))\rightarrow \infty \text { as } n \rightarrow \infty, \text { for } 1 \leq i \neq j \leq m. 
\end{gathered}
\end{equation*}
\end{prop}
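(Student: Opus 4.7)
The plan is to prove the decomposition by an iterative bubble--extraction scheme, following the Struwe--type global compactness argument that was carried out for the limiting equation in \cite[Theorem~3.3]{BS}; the only genuinely new ingredient is the perturbation $a(x)$, which is controlled via $a(x) \to 1$ at infinity. First, the PS condition gives $I_{\lambda,a}(u_n) - \tfrac{1}{p+1}\langle I_{\lambda,a}'(u_n), u_n\rangle = d + o(1) + o(\|u_n\|_\lambda)$, and the equivalence of $\|\cdot\|_\lambda$ with the standard $H^1$ norm yields boundedness of $\{u_n\}$. I would pass to a subsequence $u_n \rightharpoonup \bar u$; then subcriticality of $p$, local compactness of $H^1(B_R) \hookrightarrow L^{p+1}(B_R)$ on geodesic balls, and $a \in L^\infty$ allow testing $I_{\lambda,a}'(u_n) \to 0$ against any $\varphi \in C_c^\infty(\bn)$ to conclude $-\Delta_{\bn} \bar u - \lambda \bar u = a(x)|\bar u|^{p-1}\bar u$ in $H^{-1}(\bn)$.

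Set $z_n^{(1)} := u_n - \bar u \rightharpoonup 0$. A Brezis--Lieb identity (purely measure--theoretic, hence insensitive to the underlying manifold), combined with the fact that the mass of $z_n^{(1)}$ escapes every bounded set and $a \to 1$ there, gives that $z_n^{(1)}$ is a PS sequence for the limiting functional $I_{\lambda,1}$ at level $d - I_{\lambda,a}(\bar u)$. If $z_n^{(1)} \to 0$ strongly the claim holds with $m=0$; otherwise, a Lions--type concentration lemma on $\bn$ produces $\rho, \eta > 0$ and points $\xi_n \in \bn$ with
\[
\int_{B(\xi_n,\rho)} |z_n^{(1)}|^{p+1}\,\mathrm{d}V_{\bn} \ge \eta,
\]
and necessarily $d(\xi_n,0) \to \infty$ since $z_n^{(1)} \rightharpoonup 0$. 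Choose $\tau_n^{1} \in I(\bn)$ with $\tau_n^{1}(\xi_n) = 0$, so $\tau_n^{1}(0) \to \infty$, and extract a weak limit $w_1 \not\equiv 0$ of $z_n^{(1)} \circ (\tau_n^{1})^{-1}$. The isometry invariance of $\|\cdot\|_\lambda$ and the fact that $z_n^{(1)}$ is asymptotically a PS sequence for $I_{\lambda,1}$ jointly force $-\Delta_{\bn} w_1 - \lambda w_1 = |w_1|^{p-1}w_1$. I would then iterate on $z_n^{(2)} := z_n^{(1)} - w_1 \circ \tau_n^{1}$ using the orthogonality relations
\[
\|z_n^{(2)}\|_\lambda^2 = \|z_n^{(1)}\|_\lambda^2 - \|w_1\|_\lambda^2 + o(1), \qquad I_{\lambda,1}(z_n^{(2)}) = I_{\lambda,1}(z_n^{(1)}) - I_{\lambda,1}(w_1) + o(1).
\]
The iteration must terminate because every nontrivial solution of the limit equation satisfies $\|w\|_\lambda^2 \ge c_0 > 0$ (from the Sobolev--Poincar\'e inequality of \cite{MS} together with the Nehari identity), while the total budget $\|u_n\|_\lambda^2$ is bounded.

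The separation $d(\tau_n^i(0),\tau_n^j(0)) \to \infty$ for $i \ne j$ is enforced at each extraction: if two centers stayed at bounded hyperbolic distance, the weak limit of the rescaled $j$th remainder would vanish after composition with the appropriate isometry, contradicting $w_j \not\equiv 0$. The main technical obstacle I anticipate is precisely the asymptotic orthogonality at every iteration step: one must verify that the cross terms in $\|\cdot\|_\lambda^2$, in $\int a|\cdot|^{p+1}\mathrm{d}V_{\bn}$, and in the derivative $I_{\lambda,a}'$, between $\bar u$ and the bubbles and between different bubbles, all vanish as $n \to \infty$. This is where the hyperbolic geometry interacts with the perturbation: one uses that $\bar u \in H^1(\bn)$ is localized near the origin by elliptic decay, that the bubbles concentrate at the mutually divergent points $(\tau_n^i)^{-1}(0)$, and that $\|a-1\|_{L^\infty(\bn \setminus B_R)} \to 0$ as $R \to \infty$ absorbs the difference between $I_{\lambda,a}$ and $I_{\lambda,1}$ on far--translated profiles. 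Once these cross--term estimates are established, the rest of the argument is a routine iteration and the decomposition follows.
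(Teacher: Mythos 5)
Your proposal is correct and follows essentially the same route as the paper: an iterative Struwe-type bubble extraction adapted from \cite[Theorem~3.3]{BS}, with boundedness, a weak limit solving the perturbed equation, Brezis--Lieb splitting, a vanishing/concentration dichotomy on geodesic balls, translation by hyperbolic isometries, and passage to the limiting equation via $a(x)\to 1$ at infinity. The only minor deviations are cosmetic: you terminate the iteration with the uniform Nehari--Sobolev lower bound $\|w\|_\lambda \ge c_0$ for nontrivial solutions of the limit problem (the paper instead uses its Lemma~\ref{lem4}, a small-$L^2$ rigidity statement, together with $L^2$ bookkeeping), and you transfer the PS property to $I_{\lambda,1}$ before translating rather than after, both of which are legitimate and require only the local-compactness plus $\|a-1\|_{L^\infty(\bn\setminus B_R)}\to 0$ splitting that you already indicate.
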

To prove the above proposition, we need the following auxiliary lemmas.
\begin{lem}\label{lem2}
Let $\left\{u_{n}\right\}$ be a bounded sequence in $H^{1}\left(\mathbb{B}^{N}\right)$ such that $$\sup_{y \in \mathbb{B}^N}\int_{B(y, r)}|u_{n}|^q \mathrm{~d} V_{\mathbb{B}^{N}}\rightarrow 0 \quad \text{as} \quad n \rightarrow \infty,$$
for some $r>0$ and $2\leq q<2^{*}$. Then $u_{n} \rightarrow 0$ strongly in $L^{p}\left(\mathbb{B}^{N}\right)$ for all $p \in\left(2,2^{*}\right)$.
 In addition, if $u_{n}$ satisfies
\begin{equation}
-\Delta_{\mathbb{B}^{N}} u_{n}-\lambda u_{n}-a(x)|u_{n}|^{p-1} u_{n} \rightarrow 0 \quad \text { in } \quad H^{-1}\left(\mathbb{B}^{N}\right), \label{2bb}
\end{equation}
then $u_{n} \rightarrow 0$ strongly in $H^{1}\left(\mathbb{B}^{N}\right)$.
\begin{proof}
 For any $s \in\left(q, 2^{*}\right),$ we have from the interpolation inequality
\begin{equation}
\|u_{n}\|_{L^{s}(B(y, r))} \leq\|u_{n}\|_{L^{q}(B(y, r))}^{1-\lambda}\|u_{n}\|_{L^{2^{*}}(B(y, r))}^{\lambda}, \label{2e}
\end{equation}
 where $\lambda=\frac{s-q}{2^{*}-q} \frac{2^{*}}{s}.$ The Sobolev inequalities on geodesic balls in the hyperbolic space imply that there exists a positive constant $C>0$ independent of $y \in \bn$ such that 
 
 \begin{equation}\label{2f}
 \int_{B(y, r)}|u_{n}|^{2^{*}} d V_{\mathbb{B}^{N}}
  \leq  C \, \left(\int_{B(y, r)}\left(|u_{n}|^{2}+\left|\nabla_{\mathbb{B}^{N}} u_{n}\right|^{2}\right) \mathrm{~d} V_{\mathbb{B}^{N}}\right)^{\frac{2^{*}}{2}}.
 \end{equation}
 
\medskip 

\noi Using \eqref{2e} and \eqref{2f}, we get
\begin{equation*}
\int_{B(y, r)}|u_{n}|^{s} \mathrm{~d} V_{\mathbb{B}^{N}} \leq C|| u_{n}||_{L^{q}(B(y, r))}^{s(1-\lambda)} \left(\int_{B(y, r)}\left(|u_{n}|^{2}+\left|\nabla_{\mathbb{B}^{N}} u\right|^{2}\right) \mathrm{~d} V_{\mathbb{B}^{N}}\right)^{\frac{\lambda s}{2}}.
\end{equation*}
Now covering $\mathbb{B}^{N}$ by balls of radius $r$, in such a way that each point of $\mathbb{B}^{N}$ is contained inside at most $N_0$ balls, we find\\
\begin{equation*}
\int_{\mathbb{B}^{N}}|u_{n}|^{s} \mathrm{~d} V_{\mathbb{B}^{N}} \leq C \sup _{y \in \mathbb{B}^{N}}\|u_{n}\|_{L^{q}(B(y, r))}^{s(1-\lambda)} \int_{\mathbb{B}^{N}}\left(|u_{n}|^{2}+\left|\nabla_{\mathbb{B}^{N}} u_{n}\right|^{2}\right) \mathrm{~d} V_{\mathbb{B}^{N}}.
\end{equation*}
Therefore, utilizing the lemma hypothesis, we get for the sequence $\left\{u_{n}\right\}$ that $\left\|u_{n}\right\|_{L^{s}\left(\mathbb{B}^{N}\right)} \rightarrow 0$ for all $s \in\left(q, 2^{*}\right)$. This proves the first part of the lemma if $q=2$; otherwise, if $q>2$, then again, one can argue similarly by choosing $s \in(2, q)$.
In addition, if \eqref{2bb} is satisfied, then we obtain\\
\begin{equation}
\left|\left\langle-\Delta_{\mathbb{B}^{N}} u_{n}-\lambda u_{n}-a(x)|u_{n}|^{p-1} u_{n}, u_{n}\right\rangle_{H_{\lambda}} \right|=o(1)\left\|u_{n}\right\|_\la. \label{2cc}
\end{equation}
Since $\left\{u_{n}\right\}$ is bounded in $H^{1}\left(\mathbb{B}^{N}\right)$, the RHS is $o(1) .$ On the other hand, for the LHS we notice that because $u_{n}$ is bounded in $H^{1}\left(\mathbb{B}^{N}\right)$ and $u_{n} \rightarrow 0$ strongly in $L^{r}\left(\mathbb{B}^{N}\right)$, for $r \in\left(2,2^{*}\right)$, we must have $u_{n} \rightharpoonup 0$ weakly in $H^{1}\left(\mathbb{B}^{N}\right)$. Moreover, by the previous part, $u_{n} \rightarrow 0$ strongly in $L^{p+1}\left(\mathbb{B}^{N}\right) .$ Thus by \eqref{2cc}, we get $u_{n} \rightarrow 0$ strongly in $H^{1}\left(\mathbb{B}^{N}\right).$ 
\end{proof}
\end{lem}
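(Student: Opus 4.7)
The plan is to treat this as the standard Lions vanishing lemma transplanted to the hyperbolic setting. For the first assertion I would interpolate the $L^s$ norm between $L^q$ (whose supremum over unit balls vanishes by hypothesis) and $L^{2^*}$ (which is controlled locally by $H^1$ through Sobolev embedding on a ball), and then glue the local bounds via a bounded-multiplicity covering of $\mathbb{B}^N$ by geodesic balls of radius $r$. For the second assertion I would test the equation against $u_n$ itself and use the first part to kill the nonlinear term.

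For the first part, fix $s\in (q,2^*)$ and on each geodesic ball $B(y,r)$ write
$$\|u_n\|_{L^s(B(y,r))}\le \|u_n\|_{L^q(B(y,r))}^{1-\theta}\,\|u_n\|_{L^{2^*}(B(y,r))}^{\theta},$$
with $\theta\in(0,1)$ determined by $\tfrac{1}{s}=\tfrac{1-\theta}{q}+\tfrac{\theta}{2^*}$. Because $\mathbb{B}^N$ is a homogeneous Riemannian manifold (any two points are exchanged by an isometry), the Sobolev inequality on $B(y,r)$ is uniform in the center $y$, so $\|u_n\|_{L^{2^*}(B(y,r))}\le C\|u_n\|_{H^1(B(y,r))}$ with $C$ independent of $y$. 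Raising to the $s$-th power and summing over a covering of $\mathbb{B}^N$ by radius-$r$ balls with uniformly bounded overlap (which exists by the same homogeneity, together with the bounded geometry of $\mathbb{B}^N$) gives
$$\int_{\mathbb{B}^N}|u_n|^s\, \mathrm{d}V_{\mathbb{B}^N}\le C\,\Big(\sup_{y\in\mathbb{B}^N}\|u_n\|_{L^q(B(y,r))}\Big)^{s(1-\theta)}\|u_n\|_{H^1(\mathbb{B}^N)}^{s\theta}\longrightarrow 0.$$
To reach values $s\in (2,q)$ in case $q>2$, I would interpolate once more between the just-obtained $L^{s_0}$-decay for some $s_0\in (q,2^*)$ and the global $L^2$ bound provided by the positivity of the spectrum of $-\Delta_{\mathbb{B}^N}$ (i.e.\ by \eqref{firsteigen}, which controls $\|u_n\|_{L^2}$ in terms of $\|u_n\|_{H^1}$).

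For the second part, once $L^{p+1}$-convergence is available (take $q=2$, noting $p+1\in(2,2^*)$, and use $a\in L^\infty$), I would pair \eqref{2bb} with $u_n$ in the $H_\lambda$ duality to get
$$\|u_n\|_\lambda^2 - \int_{\mathbb{B}^N} a(x)\,|u_n|^{p+1}\, \mathrm{d}V_{\mathbb{B}^N} = o(1)\,\|u_n\|_\lambda.$$
The nonlinear integral tends to zero by the first part, and $\|u_n\|_\lambda$ is bounded, so $\|u_n\|_\lambda\to 0$; equivalence of norms (from the remark after \eqref{firsteigen}) then upgrades this to $u_n\to 0$ strongly in $H^1(\mathbb{B}^N)$. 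The only step that genuinely leaves the Euclidean template is the uniformity of the local Sobolev constant and the existence of a bounded-multiplicity covering of $\mathbb{B}^N$ by radius-$r$ balls, and I expect this to be the main point one has to be careful with; both, however, are immediate consequences of $\mathbb{B}^N$ being a two-point homogeneous space with bounded geometry.
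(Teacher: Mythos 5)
Your argument is correct and is essentially the paper's own proof: interpolation on geodesic balls between $L^{q}$ and $L^{2^{*}}$, a Sobolev constant independent of the center, a bounded-multiplicity covering of $\mathbb{B}^{N}$ by radius-$r$ balls, and then, for the second assertion, pairing the equation with $u_{n}$ so that the nonlinear term vanishes by the first part and the equivalence of $\|\cdot\|_{\lambda}$ with the $H^{1}$-norm gives strong convergence. The only (harmless) deviation is that you cover the remaining exponents $s\in(2,q)$ by a global interpolation with the uniform $L^{2}$ bound, whereas the paper simply reruns the ball-by-ball argument for such $s$; both routes are equally valid.
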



\begin{lem}\label{lem3}
Let $\phi_{k} \rightharpoonup \phi$ weakly in $H^{1}\left(\mathbb{B}^{N}\right)$, then we have
\begin{equation*}
a\left|\phi_{k}\right|^{p-1} \phi_{k}-a|\phi|^{p-1} \phi \rightarrow 0 \quad \text {in } \quad H^{-1}\left(\mathbb{B}^{N}\right).
\end{equation*}
\begin{proof}
Defining $\psi_{k} = \phi_{k}-\phi$,\;gives $\psi_{k} \rightharpoonup 0$ weakly in $H^{1}\left(\mathbb{B}^{N}\right).$ Particularly, $\left\{\psi_{k}\right\}$ is bounded in $H^{1}\left(\mathbb{B}^{N}\right) .$ Thus, up to a subsequence, $\psi_{k} \rightarrow 0$ strongly in $L_{\text {loc }}^{q}(\mathbb{B}^{N})$ for all $2<q<2^{*}$ and $\psi_{k} \rightarrow 0$ a.e.. As a result, $a\left|\phi+\psi_{k}\right|^{p-1}\left(\phi+\psi_{k}\right)-a|\phi|^{p-1} \phi \rightarrow 0$ a.e.. Therefore, it follows from Vitaly's convergence theorem that $a\left|\phi+\psi_{k}\right|^{p-1}\left(\phi+\psi_{k}\right)-a|\phi|^{p-1} \phi \rightarrow 0$ strongly in $L_{\text {loc }}^{\frac{p+1}{p}}\left(\mathbb{B}^{N}\right) .$ Also, we can see that for every $\varepsilon>0$, there exists $K_{\varepsilon}>0$ such that\\
\begin{equation}
\left|a | \phi+\psi_{k}|^{p-1}\left(\phi+\psi_{k}\right)- a|\phi|^{p-1} \phi\right|^{\frac{p+1}{p}} \leq \varepsilon\left|\psi_{k}\right|^{p+1}+K_{\varepsilon}|\phi|^{p+1}. \label{2g}
\end{equation}
Furthermore, as $\psi_{k} \rightarrow 0$ weakly in $H^{1}\left(\mathbb{B}^{N}\right)$ implies $\psi_{k}$ is uniformly bounded in $L^{p+1}\left(\mathbb{B}^{N}\right)$ and the fact that $|\phi|^{p+1} \in L^{1}\left(\mathbb{B}^{N}\right)$, it can be immediately observed from \eqref{2g} that given $\varepsilon>0$, there exists $R>0$ for which\\
$$\int_{\mathbb{R}^{N} \backslash B(0, R)}|a| \phi+\left.\psi_{k}\right|^{p-1}\left(\phi+\psi_{k}\right)-\left.a|\phi|^{p-1} \phi\right|^{\frac{p+1}{p}} \mathrm{~d} V_{\mathbb{B}^{N}}<\varepsilon.$$
Consequently, $a\left|\phi+\psi_{k}\right|^{p-1}\left(\phi+\psi_{k}\right)-a|\phi|^{p-1} \phi \rightarrow 0$ strongly in $L^{\frac{p+1}{p}}\left(\mathbb{B}^{N}\right) .$ Hence as $H^{1}\left(\mathbb{B}^{N}\right)$ is continuously embedded in $L^{p+1}\left(\mathbb{B}^{N}\right)$, which is the dual space of $L^{\frac{p+1}{p}}\left(\mathbb{B}^{N}\right)$, it follows that $a\left|\phi+\psi_{k}\right|^{p-1}\left(\phi+\psi_{k}\right)-a|\phi|^{p-1} \phi \rightarrow 0$ strongly in $H^{-1}\left(\mathbb{B}^{N}\right).$
\end{proof}
\end{lem}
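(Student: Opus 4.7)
The plan is to reduce the claim to strong convergence in the dual Lebesgue space $L^{(p+1)/p}(\mathbb{B}^N)$, and then obtain convergence in $H^{-1}(\mathbb{B}^N)$ via duality with the continuous Sobolev embedding $H^1(\mathbb{B}^N) \hookrightarrow L^{p+1}(\mathbb{B}^N)$. Setting $\psi_k := \phi_k - \phi$, I would first record that $\psi_k \rightharpoonup 0$ in $H^1(\mathbb{B}^N)$, so $\{\psi_k\}$ is bounded in $L^{p+1}(\mathbb{B}^N)$, and by Rellich--Kondrachov compactness on geodesic balls, a subsequence satisfies $\psi_k \to 0$ almost everywhere. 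Consequently, writing $F_k := a|\phi_k|^{p-1}\phi_k - a|\phi|^{p-1}\phi$, one has $F_k \to 0$ a.e.

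For local convergence on a geodesic ball $B(0,R)$, I would invoke Vitali's theorem. The uniform integrability of $\{|F_k|^{(p+1)/p}\}$ follows from the elementary pointwise inequality
\[
|F_k|^{(p+1)/p} \leq \varepsilon |\psi_k|^{p+1} + K_\varepsilon |\phi|^{p+1},
\]
combined with the uniform $L^{p+1}$-boundedness of $\{\psi_k\}$ and the integrability of $|\phi|^{p+1}$. Together with the a.e.\ convergence, this yields $F_k \to 0$ strongly in $L^{(p+1)/p}(B(0,R))$.

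The main obstacle is controlling the tail on $\mathbb{B}^N \setminus B(0,R)$, since the hyperbolic volume grows exponentially and one cannot simply exhaust $\mathbb{B}^N$ by one compact set. I would rely on the same pointwise inequality globally: for a prescribed tolerance, first choose $\varepsilon > 0$ small enough so that $\varepsilon \sup_k \|\psi_k\|_{L^{p+1}}^{p+1}$ is controlled, and then choose $R$ large so that $\int_{\mathbb{B}^N \setminus B(0,R)} |\phi|^{p+1}\, dV_{\mathbb{B}^N}$ is small, which is possible purely because $\phi \in L^{p+1}(\mathbb{B}^N)$. A standard diagonal argument then upgrades the local convergence to $F_k \to 0$ in $L^{(p+1)/p}(\mathbb{B}^N)$.

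Finally, the duality step is automatic: for any $w \in H^1(\mathbb{B}^N)$, Hölder's inequality and the Poincaré--Sobolev embedding give
\[
|\langle F_k, w\rangle| \leq \|F_k\|_{L^{(p+1)/p}(\mathbb{B}^N)} \|w\|_{L^{p+1}(\mathbb{B}^N)} \leq C\,\|F_k\|_{L^{(p+1)/p}(\mathbb{B}^N)} \|w\|_{H^1(\mathbb{B}^N)},
\]
so $\|F_k\|_{H^{-1}(\mathbb{B}^N)} \to 0$, which is the claim. The role of $a \in L^\infty(\mathbb{B}^N)$ is purely multiplicative and absorbed into the constants throughout.
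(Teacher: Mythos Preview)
Your proposal is correct and follows essentially the same route as the paper: define $\psi_k=\phi_k-\phi$, use local Rellich--Kondrachov compactness and Vitali's theorem for $L^{(p+1)/p}_{\mathrm{loc}}$ convergence, control the tail via the same pointwise inequality $|F_k|^{(p+1)/p}\le \varepsilon|\psi_k|^{p+1}+K_\varepsilon|\phi|^{p+1}$ together with the uniform $L^{p+1}$-bound on $\psi_k$ and integrability of $|\phi|^{p+1}$, and conclude in $H^{-1}$ by duality with the Sobolev embedding. The only cosmetic difference is that the passage from local to global convergence is not really a ``diagonal argument'' but simply the combination of local convergence on $B(0,R)$ with the uniform tail bound outside.
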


\medskip

\begin{lem}\label{lem4}
For each $c_{0} \geq 0$, there exists $\delta>0$ such that if $v \in H^{1}\left(\mathbb{B}^{N}\right)$ solves
\begin{equation*}
-\Delta_{\mathbb{B}^{N}} v-\lambda v=|v|^{p-1} v \text { in } H^{-1}(\mathbb{B}^{N}),
\end{equation*}
and $\|v\|_{H^{1}\left(\mathbb{B}^{N}\right)} \leq c_{0},\|v\|_{L^{2}\left(\mathbb{B}^{N}\right)} \leq \delta$, then $v \equiv 0$.
\begin{proof}Taking $v$ as a test function yields
\begin{equation*}
C^{\prime}\, \|v\|_{H^1(\bn)}^{2}  \leq \|v\|_{\lambda}^2 \, = \, \int_{\mathbb{B}^{N}}|v|^{p+1} \mathrm{~d}V_{\mathbb{B}^{N}} \leq \|v\|_{L^{2}\left(\mathbb{B}^{N}\right)}^{\alpha}\left(\int_{\mathbb{B}^{N}}|v|^{\frac{2 N}{N-2}} \mathrm{~d} V_{\mathbb{B}^{N}}\right)^{\beta} 
\leq C  \delta^{\alpha}\|v\|_{H^1(\bn) }^{\gamma}, 
\end{equation*}
with  $\gamma=\frac{2 N}{N-2} \beta,$
by Hölder and Sobolev inequalities, where $C^{\prime}, C$ denote non-negative constants independent of $c_{0}, \delta$ and $\alpha=\left(\frac{2 N}{N-2}-(p+1)\right)\left(\frac{2 N}{N-2}-2\right)^{-1},\; \beta=(p-1)\left(\frac{2 N}{N-2}-2\right)^{-1}$.\\
Now, if $p \geq 1+\frac{4}{N}$ then $\gamma \geq 2$ and we conclude easily if $\delta$ is small enough. On the other hand if $p<1+\frac{4}{N}$, we deduce
\begin{equation*}
\|v\|_{H^{1}\left(\mathbb{B}^{N}\right)} \leq C \delta^{k} \text { for } k=\alpha(2-\gamma)^{-1}> 0.
\end{equation*}
 Therefore, choosing $\delta>0$ small enough, we can conclude the lemma.
\end{proof}
\end{lem}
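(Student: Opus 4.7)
The plan is to test the equation against $v$ itself, then combine the resulting identity with an interpolation between $L^{2}$ and $L^{2^{*}}$ together with the Sobolev inequality on $\bn$. Since $\lambda<\tfrac{(N-1)^{2}}{4}$ the norm $\|\cdot\|_{\lambda}$ is equivalent to the $H^{1}(\bn)$ norm, so multiplying the equation by $v$ and integrating yields
\begin{equation*}
c_{1}\,\|v\|_{H^{1}(\bn)}^{2} \;\le\; \|v\|_{\lambda}^{2} \;=\; \int_{\bn}|v|^{p+1}\,\mathrm{d}V_{\bn}.
\end{equation*}

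Next, choosing $\theta\in(0,1)$ so that $\frac{1}{p+1}=\frac{1-\theta}{2}+\frac{\theta}{2^{*}}$ (possible because $p+1\in(2,2^{*})$), H\"older's inequality followed by the Sobolev embedding gives
\begin{equation*}
\int_{\bn}|v|^{p+1}\,\mathrm{d}V_{\bn} \;\le\; \|v\|_{L^{2}(\bn)}^{(1-\theta)(p+1)}\,\|v\|_{L^{2^{*}}(\bn)}^{\theta(p+1)} \;\le\; C\,\delta^{(1-\theta)(p+1)}\,\|v\|_{H^{1}(\bn)}^{\theta(p+1)},
\end{equation*}
where $\|v\|_{L^{2}}\le\delta$ was used in the last step. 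Writing $\gamma:=\theta(p+1)$ and combining with the previous display reduces everything to the scalar inequality
\begin{equation*}
c_{1}\,\|v\|_{H^{1}(\bn)}^{2}\;\le\; C\,\delta^{(1-\theta)(p+1)}\,\|v\|_{H^{1}(\bn)}^{\gamma}.
\end{equation*}

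I would then split according to whether $\gamma\ge 2$ (equivalently $p\ge 1+\tfrac{4}{N}$, the $L^{2}$-critical threshold) or $\gamma<2$. In the mass-supercritical case, rearranging gives $c_{1}\le C\,\delta^{(1-\theta)(p+1)}\,\|v\|_{H^{1}}^{\gamma-2}$; combined with the a priori bound $\|v\|_{H^{1}}\le c_{0}$, the right-hand side drops below $c_{1}$ once $\delta$ is small enough, forcing $v\equiv 0$. The mass-subcritical case $\gamma<2$ is what I expect to be the main obstacle: the analogous rearrangement only yields $\|v\|_{H^{1}(\bn)}\le C\,\delta^{k}$ with $k=(1-\theta)(p+1)/(2-\gamma)>0$, which is smallness rather than vanishing. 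To close the argument I would invoke a universal energy-gap lower bound: applying the Poincar\'e--Sobolev inequality directly to the identity $\|v\|_{\lambda}^{2}=\|v\|_{L^{p+1}}^{p+1}$ shows that every nontrivial solution satisfies $\|v\|_{\lambda}\ge c_{*}>0$ for a constant $c_{*}$ depending only on $N$, $p$ and $\lambda$. Choosing $\delta$ so that $C\delta^{k}<c_{*}$ then rules out any nontrivial $v$ and completes the proof.
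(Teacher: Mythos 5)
Your proposal is correct and follows essentially the same route as the paper: test the equation with $v$, interpolate the $L^{p+1}$ norm between $L^{2}$ and $L^{2^{*}}$, apply the Sobolev inequality, and split at the mass-critical exponent $p=1+\tfrac{4}{N}$ (your $\gamma=\theta(p+1)$ coincides with the paper's $\gamma=\tfrac{2N}{N-2}\beta=\tfrac{N(p-1)}{2}$). The only difference is that in the subcritical case you spell out the universal gap $\|v\|_{\lambda}\geq c_{*}>0$ for nontrivial solutions, obtained from the Poincar\'e--Sobolev inequality applied to $\|v\|_{\lambda}^{2}=\|v\|_{L^{p+1}}^{p+1}$, which is exactly the step the paper leaves implicit in ``choosing $\delta>0$ small enough, we can conclude the lemma.''
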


\noi {\bf Proof of Proposition \ref{prop1}:}

The boundedness of a PS sequence in $H^{1}\left(\mathbb{B}^{N}\right)$ follows from the standard arguments. To be precise, $I_{\lambda,a}\left(u_{n}\right)=d+o(1)$ and $I_{\lambda,a}^{\prime}\left(u_{n}\right) (u_{n})=o(1)\left\|u_{n}\right\|_{H^{1}\left(\mathbb{B}^{N}\right)}$, evaluating $I_{\lambda,a}\left(u_{n}\right)-\frac{1}{p+1}I_{\lambda,a}^{\prime}\left(u_{n}\right) (u_{n})$, we obtain
\begin{equation*}
\left\|u_{n}\right\|_{H^{1}\left(\mathbb{B}^{N}\right)}^{2} \leq C+o(1)\left\|u_{n}\right\|_{H^{1}\left(\mathbb{B}^{N}\right)},
\end{equation*}
and hence boundedness follows. As a result, we can assume $u_{n} \rightharpoonup u$ weakly in $H^{1}\left(\mathbb{B}^{N}\right)$ up to a subsequence.
Moreover, $ |I_{\lambda,a}^{\prime}(u_{n})(v)| \rightarrow 0$ as $n \rightarrow$ $\infty \;\; \forall v \in H^{1}\left(\mathbb{B}^{N}\right)$ implies 

\begin{equation}
\int_{\mathbb{B}^{N}} \nabla_{\mathbb{B}^{N}} u_{n} \nabla_{\mathbb{B}^{N}} v \mathrm{~d} V_{\mathbb{B}^{N}} \, -\, \lambda \int_{\mathbb{B}^{N}} u_{n} v \mathrm{~d} V_{\mathbb{B}^{N}}-\int_{\mathbb{B}^{N}} a(x)\left|u_{n}\right|^{p-1} u_{n} v \mathrm{~d} V_{\mathbb{B}^{N}} \rightarrow 0, \label{2i}
\end{equation}
as $n \rightarrow \infty$ for all $v \in H^{1}\left(\mathbb{B}^{N}\right)$.
 
Furthermore, using $\ref{lem3},$ we deduce that weak limit $u$ satisfies
 \begin{equation*}
-\Delta_{\mathbb{B}^{N}} u-\lambda u=a(x)|u|^{p-1} u \text{ in } H^{-1}(\mathbb{B}^{N}).
\end{equation*}

\noindent Next, we show that $u_{n}\, - \, u$ is a PS sequence for $I_{\lambda,a}$ at the level {$d-I_{\lambda,a}(u).$}
First of all, applying Brezis-Lieb lemma, we obtain the following equations\\
\begin{align*}
&\int_{\mathbb{B}^{N}} \left| \nabla_{\mathbb{B}^{N}} u_{n} \right| ^ {2}\mathrm{~d} V_{\mathbb{B}^{N}} - \int_{\mathbb{B}^{N}} \left| \nabla_{\mathbb{B}^{N}} u \right| ^ {2}\mathrm{~d} V_{\mathbb{B}^{N}}
=\int_{\mathbb{B}^{N}} \left| \nabla_{\mathbb{B}^{N}} \left(u_{n}-u\right) \right| ^ {2}\mathrm{~d} V_{\mathbb{B}^{N}} +o(1), \\
&\int_{\mathbb{B}^{N}}\left|u_{n}\right|^{2} \mathrm{~d} V_{\mathbb{B}^{N}} -\int_{\mathbb{B}^{N}}|u|^{2} \mathrm{~d} V_{\mathbb{B}^{N}}=\int_{\mathbb{B}^{N}}\left|u_{n}-u\right|^{2} \mathrm{~d} V_{\mathbb{B}^{N}}+o(1),
 \end{align*}
 \begin{align*}
\int_{\mathbb{B}^{N}} a(x)\left|u_{n}\right|^{p+1} \mathrm{~d} V_{\mathbb{B}^{N}}(x) &-\int_{\mathbb{B}^{N}} a(x)|u|^{p+1} \mathrm{~d} V_{\mathbb{B}^{N}}(x) \\
&=\int_{\mathbb{B}^{N}} a(x)\left|u_{n}-u\right|^{p+1} \mathrm{~d} V_{\mathbb{B}^{N}}(x)+o(1). 
\end{align*}
Thus applying the above equations, we obtain
\begin{equation*}
\begin{aligned}
I_{\lambda,a}\left(u_{n}-u\right) =&\frac{1}{2}\left(\left\|u_{n}\right\|_{\la}^{2}-\|u\|_{\la}^{2}\right)\\
 &-\frac{1}{p+1}\left(\int_{\mathbb{B}^{N}} a(x)\left|u_{n}\right|^{p+1} \mathrm{~d} V_{\mathbb{B}^{N}}(x) -\int_{\mathbb{B}^{N}} a(x)|u|^{p+1} \mathrm{~d} V_{\mathbb{B}^{N}}(x)\right)+o(1) \\
 &\longrightarrow  {d-I_{\lambda,a}(u)} \quad \text { as } n \rightarrow \infty. \\
\end{aligned}
\end{equation*}
 Now for $\phi \in  H^{1}\left(\mathbb{B}^{N}\right)$, 
\begin{align*}
    I_{\lambda,a}^{\prime}(u_{n}-u) (\phi)
=&\int_{\mathbb{B}^{N}} \nabla_{\mathbb{B}^{N}} \left(u_{n}-u\right) \nabla_{\mathbb{B}^{N}}\phi \, \mathrm{~d} V_{\mathbb{B}^{N}} \\
&-\lambda \int_{\mathbb{B}^{N}} \left(u_{n}-u\right) \phi \mathrm{~d} V_{\mathbb{B}^{N}} - \int_{\mathbb{B}^{N}} a(x)\left|u_{n}-u\right|^{p-1} \left(u_{n}-u\right) \phi \mathrm{~d} V_{\mathbb{B}^{N}}.\\
\end{align*}
{\text{Using \eqref{2i}, we get}
\begin{equation*}
  {I_{\lambda,a}^{\prime}(u_{n}-u), (\phi)}=\int_{\mathbb{B}^{N}} a(x)\left[\left|u_{n}\right|^{p-1} u_{n}- \left|u\right|^{p-1} u- \left|u_{n} \, - \, u\right|^{p-1} \left(u_{n} \, - \, u\right) \right] \phi  \mathrm{~d} V_{\mathbb{B}^{N}} + o(1).\\
\end{equation*}}

\noi \text{Applying the Hölder inequality, the above quantity can be estimated as}\\
\begin{equation*}
\left|\phi\right|_{L^{2^{*}}\left(\mathbb{B}^{N}\right)}\left[\int_{\mathbb{B}^{N}}\left| a(x)\left(\left|u_{n}\right|^{p-1} u_{n}-|u|^{p-1} u-\left|u_{n}-u\right|^{p-1}\left(u_{n}-u\right)\right)\right|^{\frac{2 N}{N+2}} \mathrm{~d} V_{\mathbb{B}^{N}}\right]^{\frac{N+2}{2 N}}.\\
\end{equation*}
Now, we can observe that the term inside the bracket is of $o(1)$, which follows from the standard arguments using Vitali's convergence theorem.
Hence we get $I_{\lambda,a}^{\prime}\left(u_{n}-u\right)= o(1)$.

 \noi {Therefore, in view of the Lemma $\ref{lem2}$ we have, either $u_{n}-u \rightarrow 0$ strongly} in $H^{1}\left(\mathbb{B}^{N}\right)$, in that case we are done or there exists $\alpha>0$, such that up to a subsequence
\begin{equation*}
Q_{n}(1):=\sup _{y \in \mathbb{B}^{N}} \int_{B(y, 1)}\left|u_{n}-u\right|^{2} \mathrm{~d} V_{\mathbb{B}^{N}}>\alpha>0.
\end{equation*}
Consequently, we can find a sequence $\left\{y_{n}\right\} \subset \mathbb{B}^{N}$ such that
\begin{equation}
\int_{B\left(y_{n}, 1\right)}\left|u_{n}-u\right|^{2} \mathrm{~d} V_{\mathbb{B}^{N}}\geq \alpha . \label{2n}
\end{equation}
Now define $v_{n}(x):=\left(u_{n}-u\right)\left(T_{n}(x)\right)$, where $T_{n}(x)=\tau_{y_{n}}(x)$  and $\tau_{y_{n}}$ is the hyperbolic translation of $\mathbb{B}^{N}$ by $y_{n}$. Now, as $\tau_{y_{n}}$ is an isometry, so $v_{n}$ will form a PS sequence at the same level as $u_{n}-u$ which implies ${v}_{n}$ is also bounded in $H^{1}\left(\mathbb{B}^{N}\right)$, and hence converges weakly in $H^{1}\left(\mathbb{B}^{N}\right)$ upto a subsequence to say $v$.
The compact embedding of  $H^{1}\left(B\left(0, 1\right)\right) $ into $ L^{2}\left(B\left(0, 1\right)\right)$  and \eqref{2n} implies $v\not\equiv 0$. 
Also, as $u_{n}-u \rightharpoonup 0$ weakly in $H^{1}\left(\mathbb{B}^{N}\right)$ and using \eqref{2n} it follows that
\begin{equation*}
\tau_{y_{n}}(0) \rightarrow \infty \quad \text { as } n \rightarrow \infty.
\end{equation*}
Let us now define,
\begin{equation*}
w_{n}:= v_{n}-v .
\end{equation*}
Clearly, the fact that $v_{n} \rightharpoonup v$ implies $w_{n} \rightharpoonup 0$ weakly in $H^{1}\left(\mathbb{B}^{N}\right).$ Applying this and Lemma \ref{lem3}, in the definition of $I_{\lambda,1}^{\prime}\left(w_{n}\right)$ results in
\begin{equation*}
I_{\lambda,1}^{\prime}\left(w_{n}\right)=o(1) \text { in } H^{-1}\left(\mathbb{B}^{N}\right), 
\end{equation*}
i.e.,
\begin{equation*}
-\Delta_{\mathbb{B}^{N}} w_{n}-\lambda w_{n}-|w_{n}|^{p-1} w_{n} \rightarrow 0 \quad \text { in } H^{-1}\left(\mathbb{B}^{N}\right).
\end{equation*}
 Next, we show that $v$ satisfies 
\begin{equation}
-\Delta_{\mathbb{B}^{N}} v-\lambda v = |v|^{p-1} v  \quad \text { in } H^{-1}\left(\mathbb{B}^{N}\right), \quad v \in H^{1}\left(\mathbb{B}^{N}\right). \label{2p}
\end{equation}
To prove this, let $\tilde{v} \in C_{0}^{\infty}\left(\mathbb{B}^{N}\right)$. Since, $v_{n} \rightharpoonup v$ weakly in $H^1(\bn)$, we estimate  as follows:
\begin{equation*}
\begin{aligned}
\langle v, \tilde{v} \rangle_{H_{\lambda}} &= \lim _{n \rightarrow \infty}\left\langle v_{n},\tilde{v}\right\rangle_{H_{\lambda}} \\
&= \lim _{n \rightarrow \infty} \int_{\mathbb{B}^{N}} \nabla_{\mathbb{B}^{N}} v_{n} \nabla_{\mathbb{B}^{N}} \tilde{v}  \mathrm{~d} V_{\mathbb{B}^{N}} -\lambda\int_{\mathbb{B}^{N}} v_{n} \tilde{v}  \mathrm{~d} V_{\mathbb{B}^{N}}\\
&= \lim _{n \rightarrow \infty}\int_{\mathbb{B}^{N}} \nabla_{\mathbb{B}^{N}} \left(u_{n}-u\right)(T_{n}(x)) \nabla_{\mathbb{B}^{N}} \tilde{v}  \mathrm{~d} V_{\mathbb{B}^{N}} -\lambda\int_{\mathbb{B}^{N}} \left(u_{n}-u\right)\left(T_{n}(x)\right) \tilde{v}  \mathrm{~d} V_{\mathbb{B}^{N}}\\
&= \lim _{n \rightarrow \infty} \int_{\mathbb{B}^{N}} a(y)\left|\left(u_{n}-u\right)(y)\right|^{p-1}\left(u_{n}-u\right)(y) \tilde{v}\left({T_{n}}^{-1}(y)\right)\mathrm{~d} V_{\mathbb{B}^{N}} \\
&= \lim _{n \rightarrow \infty} \int_{\mathbb{B}^{N}} a\left(T_{n}(x)\right)\left|v_{n}(x)\right|^{p-1} v_{n}(x) \tilde{v}(x) \mathrm{~d} V_{\mathbb{B}^{N}}.
\end{aligned}
\end{equation*}
Also,
%
%
\begin{equation*}
\begin{aligned} 
&\left|\int_{\mathbb{B}^{N}}  a\left(T_{n}(x)\right)\left|v_{n}(x)\right|^{p-1} v_{n}(x) \tilde{v}(x) \mathrm{~d} V_{\mathbb{B}^{N}}-\int_{\mathbb{B}^{N}}|v|^{p-1} v \tilde{v}  \mathrm{~d} V_{\mathbb{B}^{N}}\right| \\
 &\leq \left|\int_{\mathbb{B}^{N}} a\left(T_{n}(x)\right)\left(\left|v_{n}\right|^{p-1} v_{n}-\left|v\right|^{p-1} v \right) \tilde{v} \mathrm{~d} V_{\mathbb{B}^{N}}\right| \\
&\;\; +\left|\int_{\mathbb{B}^{N}}\left( a\left(T_{n}(x)\right)-1\right)\left| v\right|^{p-1} v \tilde{v} \mathrm{~d} V_{\mathbb{B}^{N}}\right|\\ 
&:=I_{n}^{1}+J_{n}^{1}. 
\end{aligned}
\end{equation*}

\noi Since $T_{n}(0) \rightarrow \infty,\left|v\right|^{p-1} v \tilde{v} \in L^{1}\left(\mathbb{B}^{N}\right), a \in L^{\infty}\left(\mathbb{B}^{N}\right)$ and $a(x) \rightarrow 1$ as $d(x,0) \rightarrow \infty$, the dominated convergence theorem yields
\begin{equation*}
\lim _{n \rightarrow \infty} J_{n}^{1}=0 .
\end{equation*}
On the other hand, since $\tilde{v}$ has a compact support; applying Vitaly's convergence theorem gives 
\begin{equation*}
\lim _{n \rightarrow \infty} I_{n}^{1} \leq \lim _{n \rightarrow \infty}\|a\|_{L^{\infty}\left(\mathbb{B}^{N}\right)} \int_{\text {supp } v}\left|\left|v_{n}\right|^{p-1} v_{n}-\left|v\right|^{p-1} v \right| \left|\tilde{v}\right| \mathrm{~d} V_{\mathbb{B}^{N}}=0 .
\end{equation*}
The above two estimates  help us to conclude that $v$ satisfies \eqref{2p}.

\noi Further, applying Brezis-Lieb Lemma, we get
\begin{equation*}
\begin{gathered}
\int_{\mathbb{B}^{N}} \left| \nabla_{\mathbb{B}^{N}} v_{n} \right| ^ {2}\mathrm{~d} V_{\mathbb{B}^{N}} - \int_{\mathbb{B}^{N}} \left| \nabla_{\mathbb{B}^{N}} v \right| ^ {2}\mathrm{~d} V_{\mathbb{B}^{N}}
-\int_{\mathbb{B}^{N}} \left| \nabla_{\mathbb{B}^{N}} w_{n} \right| ^ {2}\mathrm{~d} V_{\mathbb{B}^{N}} \rightarrow 0,\\ 
\int_{\mathbb{B}^{N}}\left|v_{n}\right|^{2} \mathrm{~d} V_{\mathbb{B}^{N}} -\int_{\mathbb{B}^{N}}|v|^{2} \mathrm{~d} V_{\mathbb{B}^{N}}-\int_{\mathbb{B}^{N}}\left|w_{n}\right|^{2} \mathrm{~d} V_{\mathbb{B}^{N}} \rightarrow 0,
\end{gathered}
 \end{equation*}
as $n \rightarrow \infty$.
In view of the above steps, we rerun the procedure for the Palais-Smale (PS) sequence $v_{n}-v$ to end up in either of the two cases. If it converges to zero, we stop, or else we repeat the steps. But it is worth noticing that this process has to terminate in finitely many steps, and we obtain $v_{1}, v_{2}, \ldots, v_{n}$ which denotes the limit solutions of \eqref{2p} obtained through the procedure, we have
\begin{equation*}
\sum_{i=1}^{n} \int_{\mathbb{B}^{N}}\left|v_{i}\right|^{2} \mathrm{~d} V_{\mathbb{B}^{N}} \leq \liminf _{n \rightarrow \infty} \int_{\mathbb{B}^{N}}\left|u_{n}-u\right|^{2} \mathrm{~d} V_{\mathbb{B}^{N}}.
\end{equation*}
Thus in view of Lemma \ref{lem4}, $n$ can not approach infinity.

\section{Key Lemma : Energy estimates involving hyperbolic bubbles}\label{keylemma}

\noi In this section, we derive key energy estimates of the functional associated with $(P_\la)$ involving hyperbolic bubbles.  To this end, let us recall the asymptotic estimates of positive solutions to the corresponding homogeneous problem
\begin{equation}
	\begin{gathered}
		-\Delta_{\mathbb{B}^{N}} w-\lambda w=|w|^{p-1} w, \; 	w>0\; \text { in } \mathbb{B}^{N}, 
		w \in H^{1}\left(\mathbb{B}^{N}\right). \label{3d}
	\end{gathered}
\end{equation}
Then by elliptic regularity, any solution, $w\in H^1(\bn),$ is also in $C^\infty$ and satisfies the decay property (See \cite[Lemma~3.4]{MS}): for every $\varepsilon > 0,$ there exist positive constants $C_1^{\varepsilon}$ and $C_2^{\varepsilon}$ such that there holds 
\begin{equation*}
	C_1^{\varepsilon} \e^{-(c(N,\lambda) + \varepsilon) \, d(x,0)} \leq w(x) \leq C_2^{\varepsilon} \e^{-(c(N,\lambda) - \varepsilon) \, d(x,0)}, \quad \hbox{for all} \ x \in \bn,
\end{equation*}
where $c(N, \lambda) \, =\, \frac{1}{2} (N-1+\sqrt{(N-1)^{2}-4 \lambda}).$ This decay property will play a pivotal role
in the subsequent analysis. 

\medskip

 \noi We define the functionals $J, J_\infty: H^1(\bn)\rightarrow \mathbb R$ as 
\begin{equation}
J(u):=\frac{\|u\|_{\lambda}^{2}}{\left(\int_{\mathbb{B}^{N}} a(x)|u(x)|^{p+1} \mathrm{~d} V_{\mathbb{B}^{N}}(x)\right)^{\frac{2}{p+1}}}, \quad J_{\infty}(u):=\frac{\|u\|_{\lambda}^{2}}{\left(\int_{\mathbb{B}^{N}}|u(x)|^{p+1} \mathrm{~d} V_{\mathbb{B}^{N}}(x)\right)^{\frac{2}{p+1}}} \\ \label{3b}
\end{equation}
and the energy levels
\begin{equation}
S_{1, \lambda}:=\inf _{u \in H^{1}\left(\mathbb{B}^{N}\right) \backslash\{0\}} J_{\infty}(u), \quad S_{m, \lambda}:=m^{\frac{p-1}{p+1}} S_{1, \lambda}, \quad m=2,3,4, \cdots \label{3c}
\end{equation}


\medskip

Let us recall a convex inequality. 
\begin{lem} \label{lem3b}
Let $p>1$ be any real number. Then for any non-negative real numbers $a,b$ there holds 
\begin{equation*}
(a+b)^{p+1} \geq a^{p+1}+b^{p+1}\, + \, p\left(a^{p} b+a b^{p}\right). 
\end{equation*}
\end{lem}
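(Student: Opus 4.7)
My plan is to reduce the inequality to two applications of Bernoulli's inequality, cleverly paired so that the cross terms assemble with the correct coefficient $p$. First I will dispose of the degenerate case $a = 0$ or $b = 0$, where both sides agree trivially. For the main case $a, b > 0$, I intend to exploit the fact that since $p > 1$, Bernoulli's inequality $(1+x)^p \geq 1 + px$ is at our disposal for every $x \geq -1$.

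The central idea is to produce two \emph{different} lower bounds on $(a+b)^p$. Writing $(a+b)^p = a^p(1 + b/a)^p$ and applying Bernoulli yields
\[
(a+b)^p \geq a^p + p\,a^{p-1} b,
\]
while the symmetric choice $(a+b)^p = b^p(1 + a/b)^p$ gives $(a+b)^p \geq b^p + p\,a\,b^{p-1}$. The key trick is then to split $(a+b)^{p+1} = a\,(a+b)^p + b\,(a+b)^p$ and apply the first estimate to the $a$-summand while applying the second to the $b$-summand. Multiplying out and collecting terms produces precisely $a^{p+1} + b^{p+1} + p\,(a^p b + a\,b^p)$, which is the claimed lower bound.

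There is no serious obstacle; the only delicate point is the asymmetric pairing. A naive symmetric approach — averaging the two tangent-line estimates $(a+b)^{p+1} \geq a^{p+1} + (p+1)\,a^p\,b$ and $(a+b)^{p+1} \geq b^{p+1} + (p+1)\,a\,b^p$ coming from convexity of $x \mapsto x^{p+1}$ — yields only the coefficient $(p+1)/2$ on the cross terms, which is strictly smaller than $p$ whenever $p > 1$. It is the \emph{asymmetric} pairing of the Bernoulli estimates, with the weight $a$ matched to the $a^p$-anchored expansion and the weight $b$ matched to the $b^p$-anchored one, that generates exactly the right coefficient.
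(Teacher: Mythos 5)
Your proof is correct: the asymmetric pairing $(a+b)^{p+1} = a\,(a+b)^p + b\,(a+b)^p$, with Bernoulli's inequality $(1+x)^p \ge 1+px$ (valid for $p\ge 1$, $x\ge -1$) applied in the form $(a+b)^p \ge a^p + p\,a^{p-1}b$ on the first summand and $(a+b)^p \ge b^p + p\,a\,b^{p-1}$ on the second, yields exactly $a^{p+1}+b^{p+1}+p\,(a^pb+ab^p)$, and the degenerate case $ab=0$ is handled correctly. Note that the paper itself does not prove this lemma; it simply cites Cerami--Passaseo, so there is no in-paper argument to compare against. Your short, self-contained elementary derivation is a perfectly adequate (indeed preferable, from the reader's point of view) replacement for the external reference, and your closing remark correctly identifies why the naive symmetric tangent-line averaging only produces the weaker coefficient $(p+1)/2$ on the cross terms.
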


For the proof of the above lemma, we refer \cite{CP}.
\begin{lem} \label{lem3c}
Let $a(x)$ satisfies (a1)-(a2) and let $w$ be a unique radial solution of \eqref{3d}. Then, there exists a large number $R_{0}$, such that for any $R \geq R_{0},\; d(x_{1},0) \geq R^{\alpha},\; d(x_{2},0)\geq R^{\alpha},\; R^{\alpha^{\prime}} \leq d\left(x_{1},x_{2}\right) \leq  R^{\alpha^{\prime}-\alpha} \min \left\{d(x_{1},0), \;d(x_{2},0)\right\}$, where $\alpha>\alpha^{\prime}>1$, it holds
\begin{equation}
J\left(t u_{1}+(1-t) u_{2}\right)<S_{2, \lambda}, \label{3h}
\end{equation}
where $0 \leq t \leq 1$ and $ u_{i}=w\left(\tau_{-x_{i}}(\bullet)\right), i=1,2$.
\end{lem}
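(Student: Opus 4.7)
The plan is to reduce the strict inequality $J(tu_1+(1-t)u_2) < S_{2,\lambda}$ to an algebraic comparison in a single interaction parameter, combined with sharp asymptotic estimates tailored to the hyperbolic geometry. Let $A := \|w\|_\lambda^2 = \int_{\bn} w^{p+1}\,\mathrm{d} V_{\bn}$. Since each $u_i = w\circ\tau_{-x_i}$ solves $-\Delta_{\bn}u_i - \lambda u_i = u_i^p$, one has $\|u_i\|_\lambda^2 = A$ and $\langle u_1,u_2\rangle_\lambda = \int_{\bn} u_1^p u_2\,\mathrm{d} V_{\bn} =: \varepsilon$, equal by symmetry to $\int_{\bn} u_1 u_2^p\,\mathrm{d} V_{\bn}$. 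Hence the numerator of $J$ expands as $\phi(t)A + 2t(1-t)\varepsilon$ with $\phi(t) := t^2 + (1-t)^2$. For the denominator, Lemma \ref{lem3b} applied pointwise to $tu_1$ and $(1-t)u_2$ gives
\[
\int_{\bn} a(x)\bigl|tu_1+(1-t)u_2\bigr|^{p+1}\,\mathrm{d} V_{\bn} \;\geq\; t^{p+1}A_1^a + (1-t)^{p+1}A_2^a + p\,t(1-t)\bigl[t^{p-1}\varepsilon^a_{12} + (1-t)^{p-1}\varepsilon^a_{21}\bigr],
\]
with $A_i^a := \int a\, u_i^{p+1}\,\mathrm{d} V_{\bn}$ and $\varepsilon^a_{ij} := \int a\, u_i^p u_j\,\mathrm{d} V_{\bn}$.

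\noindent\textbf{Algebraic step.} Raising the target inequality to the power $(p+1)/2$ and dividing by $A^{(p+1)/2}$, and using $S_{2,\lambda}^{(p+1)/2} = (2A)^{(p-1)/2}$, one reduces the claim to
\[
\bigl(\phi(t) + 2t(1-t)\,x\bigr)^{(p+1)/2} \;<\; 2^{(p-1)/2}\bigl(\psi(t) + \chi(t)\,x\bigr) \;-\; \text{(deficit)},
\]
where $x := \varepsilon/A$, $\psi(t) := t^{p+1}+(1-t)^{p+1}$, $\chi(t) := p\,t(1-t)[t^{p-1}+(1-t)^{p-1}]$, and the deficit bundles $\sum_i t^{p+1}(A-A_i^a)/A$ and analogous lower-order terms. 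At $x=0$ and zero deficit, convexity of $s\mapsto s^{(p+1)/2}$ applied to $s=t^2,(1-t)^2$ yields $\phi(t)^{(p+1)/2} \leq 2^{(p-1)/2}\psi(t)$, strict for every $t\in[0,1]\setminus\{1/2\}$ and with equality exactly at $t=1/2$. At the critical point $(t,x)=(1/2,0)$, a direct differentiation computes the $x$-derivative of the difference to be $(p-1)\,2^{-(p+3)/2}$, which is \emph{strictly positive} for $p>1$. Thus for $t$ bounded away from $1/2$ the leading inequality is already uniformly strict, while for $t$ near $1/2$ the first-order term in $\varepsilon$ produces the strict inequality, provided $\varepsilon$ dominates the deficit.

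\noindent\textbf{Asymptotic estimates.} Using $\tau_{x_i}$-invariance of $\mathrm{d} V_{\bn}$ and the Mancini--Sandeep bounds $C_1 e^{-(c(N,\lambda)+\epsilon')d(x,0)} \leq w(x) \leq C_2 e^{-(c(N,\lambda)-\epsilon')d(x,0)}$, assumption (a2) yields
\[
0\leq A - A_i^a = \int_{\bn}(1-a(x))u_i^{p+1}\,\mathrm{d} V_{\bn} \;\leq\; C\,e^{-\delta\,d(x_i,0)}.
\]
For $\varepsilon$, I decompose $\bn = B(x_1,d/2) \cup B(x_2,d/2) \cup \mathrm{exterior}$ with $d := d(x_1,x_2)$, switch to geodesic polar coordinates based at $x_1$ (respectively $x_2$) in each ball, and combine the triangle inequality $d(x,x_1)+d(x,x_2)\geq d$ with the hyperbolic volume factor $\sinh^{N-1}\rho$ and the sharp exponential decay of $w$ to extract $\varepsilon \asymp e^{-c(N,\lambda)\,d(x_1,x_2)}$ up to polynomial prefactors in $d$; an analogous decomposition handles $\varepsilon^a_{ij}$. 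The hypothesis $d(x_1,x_2) \leq R^{\alpha'-\alpha}\min\{d(x_1,0),d(x_2,0)\}$ with $\alpha>\alpha'>1$ forces $c(N,\lambda)\,d(x_1,x_2) \ll \delta\,d(x_i,0)$ once $R$ is large, so $\varepsilon$ strictly dominates the $a$-deficit; simultaneously $d(x_1,x_2) \geq R^{\alpha'}\to\infty$ keeps $\varepsilon$ small enough for the first-order expansion of the algebraic step to be valid. Putting these ingredients together yields $J(tu_1+(1-t)u_2) < S_{2,\lambda}$ uniformly in $t\in[0,1]$ for all sufficiently large $R$.

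\noindent\textbf{Main obstacle.} The crux is the sharp interaction estimate $\varepsilon \asymp e^{-c(N,\lambda)d(x_1,x_2)}$. In Euclidean space the analogous bound is straightforward because $dx$ grows polynomially and $w$ decays exponentially, so the dominated region is easy to identify. In $\bn$ the volume element $\mathrm{d} V_{\bn}$ grows like $e^{(N-1)r}$ along geodesics, almost cancelling the exponential decay, so a naive computation loses the correct leading order (and even the finiteness of the interaction becomes delicate). One must orchestrate the ball/exterior decomposition together with the sharp Mancini--Sandeep decay and the triangle inequality in hyperbolic polar coordinates in order to extract the correct leading exponential $e^{-c(N,\lambda)d(x_1,x_2)}$. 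The three-way parameter window $\alpha>\alpha'>1$, in combination with the lower bound $d(x_i,0)\geq R^\alpha$, is precisely what makes this leading order dominate the competing $a$-deficit.
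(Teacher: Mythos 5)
Your route is essentially the paper's own: expand the numerator using that $u_1,u_2$ solve \eqref{3d} (so $\|u_i\|_\lambda^2=A$ and $\langle u_1,u_2\rangle_\lambda=\int_{\bn}u_1^pu_2\,\mathrm{d}V_{\bn}=:\varepsilon$), lower-bound the $a$-weighted denominator via Lemma \ref{lem3b}, control the $(1-a)$-deficit by (a2) combined with $d(x_1,x_2)\le R^{\alpha'-\alpha}\min_i d(x_i,0)$, and split into $t$ near $\tfrac12$ (where the first-order interaction term, coefficient $p$ versus $1$, gives strictness provided $\varepsilon$ dominates the deficit) and $t$ away from $\tfrac12$ (where the strict inequality $\phi(t)^{(p+1)/2}<2^{(p-1)/2}\psi(t)$ does, provided $\varepsilon$ and the deficit are $o(1)$). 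Your packaging through the function $F(t,x)$ and the correct computation $\partial_xF(\tfrac12,0)=(p-1)2^{-(p+3)/2}>0$ is a slightly cleaner way to obtain uniformity in $t$ than the paper's neighbourhood-of-$\tfrac12$ argument, but it is the same method, not a different one.

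There are, however, two places where what you assert is not what your sketch can deliver, and both sit exactly at the volume-growth issue you call the crux. First, the claimed two-sided estimate $\varepsilon\asymp e^{-c(N,\lambda)d(x_1,x_2)}$ (up to polynomial prefactors) does not follow from ``triangle inequality $+$ $\sinh^{N-1}\rho$ $+$ decay of $w$'': writing $d=d(x_1,x_2)$, on the ball $B(x_1,d/2)$ that scheme only gives $\varepsilon\lesssim e^{-(c-\epsilon)d}\int_0^{d/2}e^{[(N-1)-(c-\epsilon)(p-1)]s}\,\mathrm{d}s$, i.e.\ an upper bound of order $e^{-[(c-\epsilon)(p+1)-(N-1)]d/2}$, which is much larger than $e^{-cd}$ whenever $c(N,\lambda)(p-1)\le N-1$ (e.g.\ $p$ close to $1$); recovering the rate $e^{-cd}$ would need the finer spherical-cap geometry (the set where $d(x,x_1)+d(x,x_2)\le d+O(1)$ has bounded measure), and in any case the $\epsilon$-loss bounds of \cite{MS} that you quote cannot produce matching exponents, let alone polynomial prefactors. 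Fortunately the argument only needs (a) the lower bound $\varepsilon\ge C_\epsilon e^{-(c+\epsilon)d}$, obtained as in the paper by restricting the integral to a unit ball about $x_1$, and (b) $\varepsilon=o(1)$ as $R\to\infty$, which does hold because $c(N,\lambda)(p+1)>N-1$ (the paper proves it by a Hölder argument, treating $1<p<2$ and $p\ge2$ separately); you should prove these two facts rather than the unproved $\asymp$. Second, $\int_{\bn}(1-a)_+u_i^{p+1}\,\mathrm{d}V_{\bn}\le Ce^{-\delta d(x_i,0)}$ is not automatic: the same volume factor turns the direct estimate into $Ce^{-(\delta-(N-1))d(x_i,0)}$ (this is the paper's $I_2^i$), and your exponent $\delta$ is attainable only after first decreasing $\delta$ below $(p+1)c(N,\lambda)-(N-1)$ (legitimate, since (a2) persists when $\delta$ is decreased, and positive because $c(N,\lambda)(p+1)>N-1$). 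Either way one must record some positive exponent in $d(x_i,0)$ and then invoke $d(x_i,0)\ge R^{\alpha-\alpha'}d(x_1,x_2)\gg d(x_1,x_2)$ to beat the interaction's lower bound; this is where $\alpha>\alpha'>1$ and $d(x_i,0)\ge R^\alpha$ enter. Both points are repairable along the paper's lines, but as written they are asserted rather than handled.
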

\begin{proof} \quad
\begin{enumerate}[label = \textbf{Step \arabic*:}]
\item 
In this step, we will prove $\eqref{3h}$ for $t=\frac{1}{2}$.  
Set,
\[A:=\|w\|_{H_{\lambda}}^{2}.\]
Then by taking  $w$ as a test function in $\eqref{3d}$, we obtain
\begin{equation*}
A =\int_{\mathbb{B}^{N}} w(x)^{p+1} \mathrm{~d} V_{\mathbb{B}^{N}}.
\end{equation*}
Therefore,
\begin{equation}
\begin{aligned}
	J\left(\frac{1}{2}(u_{1}+u_{2})\right) &=
J\left(u_{1}+u_{2}\right)\\ &=\frac{\left\|u_{1}\right\|_{H_{\lambda}}^{2}+\left\|u_{2}\right\|_{H_{\lambda}}^{2}+2\left\langle u_{1}, u_{2}\right\rangle_{H_{\lambda}}}{\left(\int_{\mathbb{B}^{N}}\left(u_{1}+u_{2}\right)^{p+1} \mathrm{~d} V_{\mathbb{B}^{N}}-\int_{\mathbb{B}^{N}}(1-a(x))\left(u_{1}+u_{2}\right)^{p+1} \mathrm{~d} V_{\mathbb{B}^{N}}\right)^{\frac{2}{p+1}}}  \\
& \leq \frac{ 2A+2\left\langle u_{1}, u_{2}\right\rangle_{{H_{\lambda}}}}{\left(\int_{\mathbb{B}^{N}}\left(u_{1}+u_{2}\right)^{p+1} \mathrm{~d} V_{\mathbb{B}^{N}}-\int_{\mathbb{B}^{N}}(1-a(x))_{+}\left(u_{1}+u_{2}\right)^{p+1} \mathrm{~d} V_{\mathbb{B}^{N}}\right)^{\frac{2}{p+1}}}. \label{3.g}
\end{aligned}
\end{equation}
Firstly, we estimate the integral, $\int_{\mathbb{B}^{N}} u_{1}^{p} \, u_{2} \, \mathrm{~d} V_{\mathbb{B}^{N}}.$ Since $w$ is positive, radially symmetric, symmetric decreasing and smooth, we get
\begin{equation*}
\begin{aligned}
\int_{\mathbb{B}^{N}} u_{1}^{p} u_{2} \mathrm{~d} V_{\mathbb{B}^{N}} &=\int_{\mathbb{B}^{N}} w\left(d(\tau_{-x_{1}}(x),0)\right)^{p} w\left(d(\tau_{-x_{2}}(x),0)\right) \mathrm{~d} V_{\mathbb{B}^{N}}(x)\\
& \geq \int_{d(x,x_{1}) \leq 1}w\left(d(\tau_{-x_{1}}(x),0)\right)^{p} w\left(d(\tau_{-x_{2}}(x),0)\right) \mathrm{~d} V_{\mathbb{B}^{N}}(x) \\
& \geq C \int_{d(x,x_{1})\leq 1} w\left(d(\tau_{-x_{2}}(x),0)\right)  \mathrm{~d} V_{\mathbb{B}^{N}}(x) \\
& =  \int_{d(x,x_{1})\leq 1} w\left(d(x,x_2)\right)  \mathrm{~d} V_{\mathbb{B}^{N}}(x).
\end{aligned}
\end{equation*}
 In the above calculations, we have used the following observations 
$$d(x,x_{1}) \leq 1 \Longrightarrow d(x,x_{2}) \leq d(x,x_{1})+d(x_{1},x_{2}) \leq 1+d(x_{1},x_{2}).$$ Consequently, $w\left(d(x,x_{2}\right) \geq w\left(1+d(x_{1},x_{2})\right)$. Therefore, for $d(x_{1},x_{2})>>1$,
\begin{equation*}
\int_{\mathbb{B}^{N}} u_{1}^{p} \, u_{2} \, \mathrm{~d} V_{\mathbb{B}^{N}} \geq C_{\varepsilon} w\left(1+d(x_{1},x_{2})\right) \geq C_{\varepsilon}
 \e^{-(c(N,\lambda) + \varepsilon)^{+} d(x_{1},x_{2})},
\end{equation*} 
where $C_{\varepsilon}$ is a positive constant independent of $R$, and for $\gamma \in \mathbb{R},$ $\gamma^{+}\left(\right.$respectively $\left.\gamma^{-}\right)$ stands for any $\gamma+\delta$ (respectively $\gamma-\delta$ ) with $\delta>0.$

\medskip

\noi Next, we estimate
\begin{align*}
&\int_{\mathbb{B}^{N}}(1-a)_{+}\left(u_{1}+u_{2}\right)^{p+1} \mathrm{~d} V_{\mathbb{B}^{N}} 
\leq C \sum_{i=1}^{2} \int_{\mathbb{B}^{N}}(1-a)_{+} u_{i}^{p+1} \mathrm{~d} V_{\mathbb{B}^{N}}\notag\\
&\leq C \sum_{i=1}^{2} \int_{\mathbb{B}^{N}} \frac{w\left(d(\tau_{-x_{i}}(x),0)\right)^{p+1}}{\e^{\delta d(x,0)}} \mathrm{~d} V_{\mathbb{B}^{N}}\notag\\
&= C \sum_{i=1}^{2} \left( \underbrace{ \int_{d(x,x_{i})> d(x_{i},0)} \frac{w\left(d(\tau_{-x_{i}}(x),0)\right)^{p+1}}{\e^{\delta d(x,0)}} \mathrm{~d} V_{\mathbb{B}^{N}}}_{I_1^i} 
+ \underbrace{\int_{d(x,x_{i}) \leq d(x_{i},0)} \frac{w\left(d(\tau_{-x_{i}}(x),0)\right)^{p+1}}{\e^{\delta d(x,0)}} \mathrm{~d} V_{\mathbb{B}^{N}}}_{I_2^i} \right).\notag\\
\end {align*}
Now we shall  estimate $I_{1}^{i}+ I_{2}^{i}$  with  $ \delta \in \left(K c(N, \lambda)+(N-1), (p+1) c(N, \lambda)\right),$ and choose $\varepsilon > 0$ such that 
$\delta < (p+1) (c(N, \lambda) - \varepsilon),$ where $K$ is fixed, and is such that $0<K< (p+1)-\frac{N-1}{c(N, \lambda)}.$ Before moving further, we  note that  $\frac{c(N, \lambda)}{N-1}> \frac{1}{2}$ and $p+1>2$ implies $(p+1)-\frac{N-1}{c(N, \lambda)}>0$. Let us first consider $I_{1}^{i}:$

\medskip

\begin{align*}
I_{1}^{i}&= \int_{d(x,x_{i})> d(x_{i},0)} \frac{w\left(d(\tau_{-x_{i}}(x),0)\right)^{p+1}}{\e^{\delta d(x,0)}} \mathrm{~d} V_{\mathbb{B}^{N}}(x)\notag\\
& \leq \int_{d(x,x_{i})> d(x_{i},0)} \frac{w\left(d(x_{i},0)\right)^{p+1}}{\e^{\delta d(x,0)}} \mathrm{~d} V_{\mathbb{B}^{N}}(x)\notag \\
&\leq C_{\varepsilon}\e^{-(c(N,\lambda) - \varepsilon)d(x_{i},0)(p+1)} \int_{d(x,x_{i})> d(x_{i},0)} \e^{-\delta d(x,0)} \mathrm{~d} V_{\mathbb{B}^{N}}(x)\notag\\
&\leq C_{\varepsilon}\e^{-(c(N,\lambda) - \varepsilon)d(x_{i},0)(p+1)} \int_{d(x,0)>0} \e^{-\delta d(x,0)} \mathrm{~d} V_{\mathbb{B}^{N}}(x)\notag\\
& = C_{\varepsilon}\e^{-(c(N,\lambda) - \varepsilon)d(x_{i},0)(p+1)} \int_{r=0}^{\infty} \e^{-\delta r} \e^{(N-1)r} \mathrm{~d} r\\
& \leq C_{\varepsilon}\e^{-(c(N,\lambda) - \varepsilon)d(x_{i},0)(p+1)},
\end{align*}
where passing from the second inequality to third we have used $d(x,x_{i})> d(x_{i},0)$ which in turn implies,  $d(x,0) \geq d(x, x_{i})- d(x_{i},0) > 0 $, and the last step follows from the choice of $\delta$ as $\delta > K c(N, \lambda)+(N-1) > N-1.$

\medskip

\noi Now, we estimate $I_{2}^{i}$ for $ \delta < (p+1) (c(N, \lambda) - \varepsilon),$ and using triangle inequality $d(x_{i},0) \leq d(x_{i},x)+d(x,0),$
\begin{align*}
I_{2}^{i}&= \int_{d(x,x_{i}) \leq d(x_{i},0)} \frac{w\left(d(\tau_{-x_{i}}(x),0)\right)^{p+1}}{\e^{\delta d(x,0)}} \mathrm{~d} V_{\mathbb{B}^{N}}(x)\\
&\leq C_{\varepsilon} \int_{d(x,x_{i}) \leq d(x_{i},0)} \e^{- (c(N,\lambda) - \varepsilon)d(x,x_{i})(p+1) } \e^{-\delta d(x,0))}\mathrm{~d} V_{\mathbb{B}^{N}}(x)\\
&\leq C_{\varepsilon} \int_{d(x,x_{i}) \leq d(x_{i},0)} \e^{- \delta d(x,x_{i})} \e^{-\delta d(x,0))}\mathrm{~d} V_{\mathbb{B}^{N}}(x)  \\
&\leq C_{\varepsilon} \int_{d(x,x_{i}) \leq d(x_{i},0)}  \e^{-\delta d(x_{i},0)} \mathrm{~d} V_{\mathbb{B}^{N}}(x)\\
&\leq C_{\varepsilon} \e^{-\delta d(x_{i},0)}\int_{d(x,0) \leq d(x_{i},0)}\mathrm{~d} V_{\mathbb{B}^{N}}(x)\\
&\leq C_{\varepsilon} \e^{-\delta d(x_{i},0)} \e^{(N-1)d(x_{i},0)}.\notag
\end{align*}

\noi Combining the above estimates results in
\begin{align}
&\int_{\mathbb{B}^{N}}(1-a)_{+}\left(u_{1}+u_{2}\right)^{p+1} \mathrm{~d} V_{\mathbb{B}^{N}}(x)\notag\\
&\leq C_{\varepsilon} \sum_{i=1}^{2}\left(\e^{-(c(N,\lambda ) - \varepsilon)(p+1)d(x_{i},0)}+ \e^{-\delta d(x_{i},0)} \e^{(N-1)d(x_{i},0)}\right)\notag\\
&\leq C_{\varepsilon} \sum_{i=1}^{2}\left(\e^{-(\delta-(N-1))d(x_{i},0)}\right) \text{ since } \delta < (p+1)(c(N,\lambda) - \varepsilon) \notag\\
&\leq C_{\varepsilon} \e^{\left(-\frac{K c(N, \lambda)d(x_{1},x_{2})}{K+ R^{\alpha^{\prime}-\alpha}}\right)} \notag \\
&\leq C_{\varepsilon}\e^{\left(-\frac{\left(c(N, \lambda)+c_{K}\right)d(x_{1},x_{2})}{1+\frac{1}{K} R^{\alpha^{\prime}-\alpha}}\right)} \notag \\
  &\leq C_{\varepsilon}\e^{ -\left(c(N, \lambda)+c_{K}\right)d(x_{1}, x_{2})} \notag \\
  & \leq o(1)\left(\int_{\mathbb{B}^{N}} u_{1}^{p} u_{2} \mathrm{~d} V_{\mathbb{B}^{N}}\right) \label{3xx}
&\intertext{for large $R>0$.}  \notag 
\end{align}
 In the above calculations we have used our hypothesis, $d\left(x_{1},x_{2}\right) \leq (K+  R^{\alpha^{\prime}-\alpha}) d(x_{i},0)$ for $i=1,2.$ Further, as $p > 1$, we can use Lemma \ref{lem3b} to get 
\begin{equation}
\int_{\mathbb{B}^{N}}\left(u_{1}+u_{2}\right)^{p+1} \mathrm{~d} V_{\mathbb{B}^{N}} \geq \int_{\mathbb{B}^{N}}\left(u_{1}^{p+1}+u_{2}^{p+1}\right) \mathrm{~d} V_{\mathbb{B}^{N}}+ p \int_{\mathbb{B}^{N}}\left(u_{1}^{p} u_{2}+u_{1} u_{2}^{p}\right) \mathrm{~d} V_{\mathbb{B}^{N}}. \label{3q} 
\end{equation}
Since, $u_{1}, u_{2}$ solves \eqref{3d}, that implies
\begin{equation}
\left\langle u_{1}, u_{2}\right\rangle_{H_{\lambda}}=\int_{\mathbb{B}^{N}} u_{1}^{p} u_{2} \mathrm{~d} V_{\mathbb{B}^{N}}=\int_{\mathbb{B}^{N}} u_{2}^{p} u_{1} \mathrm{~d} V_{\mathbb{B}^{N}}, \;\left\|u_{i}\right\|_{H_{\lambda}}=\int_{\mathbb{B}^{N}} u_{i}^{p+1} \mathrm{~d} V_{\mathbb{B}^{N}}, \; i=1,2. \label{3r}
\end{equation}
Therefore, using \eqref{3r} in \eqref{3q}, we obtain
 \begin{equation}
 \int_{\mathbb{B}^{N}}\left(u_{1}+u_{2}\right)^{p+1} \mathrm{~d} V_{\mathbb{B}^{N}} \geq 2 A+2p\left\langle u_{1}, u_{2}\right\rangle_{H_{\lambda}}. \label{3s}
 \end{equation}
 Combining \eqref{3s} and \eqref{3xx} with \eqref{3.g} yields
 \begin{equation*}
\begin{aligned}
J\left(u_{1}+u_{2}\right) & \leq \frac{2 A+2\left\langle u_{1}, u_{2}\right\rangle_{H_{\la}}}{\left(\int_{\mathbb{B}^{N}}\left(u_{1}+u_{2}\right)^{p+1}\mathrm{~d} V_{\mathbb{B}^{N}}-\int_{\mathbb{B}^{N}}(1-a)_{+}\left(u_{1}+u_{2}\right)^{p+1}\mathrm{~d} V_{\mathbb{B}^{N}}\right)^{\frac{2}{p+1}}} \\
& \leq \frac{2 A+2\left\langle u_{1}, u_{2}\right\rangle_{H_{\la}}}{\left(2 A+\left(2 p-o(1)\right)\left\langle u_{1}, u_{2}\right\rangle_{H_{\la}}\right)^{\frac{2}{p+1}}} \\
&=\frac{(2 A)^{\frac{p-1}{p+1}}\left(1+\frac{1}{A}\left\langle u_{1}, u_{2}\right\rangle_{H_{\la}}\right)}{\left(1+\frac{p-o(1)}{A}\left\langle u_{1}, u_{2}\right\rangle_{H_{\la}}\right)^{\frac{2}{p+1}}}
\end{aligned}
\end{equation*}
From \cite{MS}, it is known that $J_{\infty}$ is achieved by $w$, which is a solution of \eqref{3d}. This in turn implies $S_{1, \lambda}=A^{\frac{p-1}{p+1}}$ and $S_{2, \lambda}=(2 A)^{\frac{p-1}{p+1}}$. Hence,
\begin{equation*}
\begin{aligned}
J\left(u_{1}+u_{2}\right) & \leq S_{2, \lambda} \frac{1+\frac{1}{A}\left\langle u_{1}, u_{2}\right\rangle_{H_{\la}}}{\left(1+\frac{p-o(1)}{A}\left\langle u_{1}, u_{2}\right\rangle_{H_{\la}}\right)^{\frac{2}{p+1}}} \\
& \leq S_{2, \lambda} \frac{1+\frac{1}{A}\left\langle u_{1}, u_{2}\right\rangle_{H_{\la}}}{\left(1+\frac{2 p-o(1)}{(p+1) A}\left\langle u_{1}, u_{2}\right\rangle_{H_{\la}}\right)}.
\end{aligned}
\end{equation*}
Now, $\frac{1}{A}<\frac{2 p-o(1)}{(p+1) A}$ for $R$ large and hence $J\left(u_{1}+u_{2}\right)<S_{2, \lambda}. $
\item We will complete the proof of the lemma in this step. Suppose,
\begin{equation*}
v_{1}=t u_{1}, \quad v_{2}=(1-t) u_{2}, \quad \text { where } \quad t \in[0,1] .
\end{equation*}
Doing a straight forward computation as in \eqref{3.g}, it is easy to check that
\begin{equation*}
J\left(v_{1}+v_{2}\right) \leq \frac{\left(t^{2}+(1-t)^{2}\right) A+2 t(1-t)\left\langle u_{1}, u_{2}\right\rangle_{H_{\la}}}{\left(\int_{\mathbb{B}^{N}}\left|v_{1}+v_{2}\right|^{p+1} \mathrm{~d} V_{\mathbb{B}^{N}}-\int_{\mathbb{B}^{N}}(1-a)_{+}\left|v_{1}+v_{2}\right|^{p+1} \mathrm{~d} V_{\mathbb{B}^{N}}\right)^{\frac{2}{p+1}}}
\end{equation*}
Observing that for $t$ or $1-t$ tending to zero, $v_{1}+v_{2}$ tends to $u_{2}$ or $u_{1}$, and consequently, $J\left(v_{1}+v_{2}\right)$ converges to $S_{1, \lambda}$. Therefore, there exists some $\delta^{\prime}>0$ such that any $\min \{t, 1-t\} \leq \delta^{\prime}$, then $J\left(v_{1}+v_{2}\right)<S_{2, \lambda}$, and this $\delta^{\prime}$ is independent of (large) $R$. Moreover, $J(\frac{u_1 + u_2}{2}) < S_{2, \lambda},$ i.e.,  $J(tu_1 + (1-t) u_2) < S_{2, \lambda}$ for $t = \frac{1}{2}.$ Therefore there exists a neighbourhood, say $N(\frac{1}{2})$ of $t =\frac{1}{2}$ 
such that $J(t u_1 + (1-t)u_2) < S_{2,\la}$ for all $t \in N(\frac{1}{2}).$

From here onward, we will assume $\min \{t, 1-t\} \geq \delta^{\prime}$ and $t \in ([0, 1] \setminus N(\frac{1}{2})).$

\medskip

Using \eqref{3xx}, we can get the subsequent inequality
\begin{equation}
\begin{aligned}
\int_{\mathbb{B}^{N}}(1-a)_{+}\left|v_{1}+v_{2}\right|^{p+1} \mathrm{~d} V_{\mathbb{B}^{N}} &=\int_{\mathbb{B}^{N}}(1-a)_{+}\left|t u_{1}+(1-t) u_{2}\right|^{p+1} \mathrm{~d} V_{\mathbb{B}^{N}} \\
& \leq \int_{\mathbb{B}^{N}}(1-a)_{+}\left|u_{1}+u_{2}\right|^{p+1} \mathrm{~d} V_{\mathbb{B}^{N}}  \\
& \leq o(1)\left\langle u_{1}, u_{2}\right\rangle_{H_{\lambda}}. \label{3l.l}
\end{aligned}
\end{equation} 

\medskip

 Using Lemma~\ref{lem3b} and equations in \eqref{3r}, we obtain 
\begin{equation}
\begin{aligned}
\int_{\mathbb{B}^{N}}\left|v_{1}+v_{2}\right|^{p+1}\mathrm{~d} V_{\mathbb{B}^{N}}=& \int_{\mathbb{B}^{N}}\left|t u_{1}+(1-t) u_{2}\right|^{p+1} \mathrm{~d} V_{\mathbb{B}^{N}} \\
\geq\; & t^{p+1} \int_{\mathbb{B}^{N}} u_{1}^{p+1} \mathrm{~d} V_{\mathbb{B}^{N}}+(1-t)^{p+1} \int_{\mathbb{B}^{N}} u_{2}^{p+1} \mathrm{~d} V_{\mathbb{B}^{N}}\\
&+p \int_{\mathbb{B}^{N}}\left[t^{p}(1-t) u_{1}^{p} u_{2}+t(1-t)^{p} u_{1} u_{2}^{p}\right] \mathrm{~d} V_{\mathbb{B}^{N}} \\
= &\left(t^{p+1}+(1-t)^{p}\right) A+p\left\{t^{p}(1-t)+t(1-t)^{p}\right\}\left\langle u_{1}, u_{2}\right\rangle_{H_{\lambda}}. \\ \label{4j.j}
\end{aligned}
\end{equation}
We claim that 
\begin{equation*}
  \left\langle u_{1}, u_{2}\right\rangle_{H_{\lambda}}=\int_{\mathbb{B}^{N}} u_{1}^{p} u_{2} \mathrm{~d} V_{\mathbb{B}^{N}} \rightarrow 0 \quad \text{as} \quad R \rightarrow \infty. 
\end{equation*}
\begin{equation*}
\begin{aligned}
\int_{\mathbb{B}^{N}} u_{1}^{p} u_{2} \mathrm{~d} V_{\mathbb{B}^{N}} & = \int_{\mathbb{B}^{N}} w\left(d(\tau_{-x_{1}}(x),0)\right)^{p} w\left(d(\tau_{-x_{2}}(x),0)\right)\mathrm{~d} V_{\mathbb{B}^{N}}(x)\\
&= \int_{\mathbb{B}^{N}} w\left(d(x,0)\right)^{p} w\left(d(x,\tau_{-x_{1}}(x_{2}))\right)\mathrm{~d} V_{\mathbb{B}^{N}}(x)\\
&= \int_{d(x,0)\leq \frac{d(\bar{x},0)}{2}} w\left(d(x,0)\right)^{p} w\left(d(x,\tau_{-x_{1}}(x_{2}))\right)\mathrm{~d} V_{\mathbb{B}^{N}}(x)\\
& +  \int_{d(x,0)> \frac{d(\bar{x},0)}{2}}w\left(d(x,0)\right)^{p} w\left(d(x,\tau_{-x_{1}}(x_{2}))\right)\mathrm{~d} V_{\mathbb{B}^{N}}(x)\\
& \text{where}\quad \bar{x} =  \tau_{-x_{1}}(x_{2})\\
&:=I_{1}+I_{2}.\\
\end{aligned}
\end{equation*}
Now, 
\begin{equation*}
\begin{aligned}
I_{1}: &= \int_{d(x,0) \leq \frac{d(\bar{x},0)}{2}}w\left(d(x,0)\right)^{p} w\left(d(x,\tau_{-x_{1}}(x_{2}))\right)\mathrm{~d} V_{\mathbb{B}^{N}}(x). 
\end{aligned}
\end{equation*}
\begin{equation*}
\text{As} \quad d(\bar{x},x) \geq d(\bar{x},0) - d(x,0) \geq  d(\bar{x},0) - \frac{d(\bar{x},0)}{2} = \frac{d(\bar{x},0)}{2}
\end{equation*}
Consequently, $w\left(d(\bar{x},x)\right) \leq w\left(\frac{d(\bar{x},0)}{2}\right)$. Therefore,\\
\begin{equation*}
\begin{aligned}
I_{1}& \leq \int_{d(x,0)\leq \frac{d(\bar{x},0)}{2}}w\left(d(x,0)\right)^{p} w\left(\frac{d(\bar{x},0)}{2}\right)\mathrm{~d} V_{\mathbb{B}^{N}}(x)\\
& \leq C_{\varepsilon} \e^{-(c(N,\lambda) - \varepsilon) \frac{d(\bar{x},0)}{2}} \int_{0}^{\frac{d(\bar{x},0)}{2}} \e^{-(c(N, \lambda) - \varepsilon)p r} \e^{(N-1)r} \mathrm{~d}r\\
& \leq C_{\varepsilon} \left[\e^{\left(-(c(N,\lambda) - \varepsilon)(p+1)+(N-1)\right) \frac{d(\bar{x},0)}{2}}- \e^{-(c(N,\lambda) - \varepsilon)\frac{d(\bar{x},0)}{2}}\right] \, = \, o(1),\\
\end{aligned}
 \end{equation*}
 since $c(N, \lambda) (p+1) > N-1$, we can choose $\varepsilon$ such that $(c(N, \lambda) - \varepsilon) (p+1) > N-1$ and where $o(1)$  tends to  $0$ as $R \rightarrow \infty$. 

\medskip 

Next we shall estimate $I_2 :$  to estimate $I_2$ we  distinguish into  the following two cases.

\medskip

Case 1 : $1 < p < 2.$
 
\begin{equation*}
\begin{aligned}
I_{2}: &= \int_{d(x,0)> \frac{d(\bar{x},0)}{2}}w\left(d(x,0)\right)^{p} w\left(d(x,\tau_{-x_{1}}(x_{2}))\right)\mathrm{~d} V_{\mathbb{B}^{N}}(x)\\
& \leq \left( \int_{d(x,0)> \frac{d(\bar{x},0)}{2}}w^{2}\left(d(x,0)\right)\mathrm{~d} V_{\mathbb{B}^{N}}(x)\right)^{\frac{p}{2}}\left( \int_{d(x,0)> \frac{d(\bar{x},0)}{2}}w^{\frac{2}{2-p}}\left(d(x,\tau_{-x_{1}}(x_{2}))\right)\mathrm{~d} V_{\mathbb{B}^{N}}(x)\right)^{\frac{2-p}{2}}\\
& \leq \left( \int_{d(x,0)> \frac{d(\bar{x},0)}{2}}w^{2}\left(d(x,0)\right)\mathrm{~d} V_{\mathbb{B}^{N}}(x)\right)^{\frac{p}{2}} 
\left( \int_{\mathbb{B}^{N}}w^{\frac{2}{2-p}}\left(d(x,\tau_{-x_{1}}(x_{2}))\right)\mathrm{~d} V_{\mathbb{B}^{N}}(x)\right)^{\frac{2-p}{2}}\\
&\leq C_{\varepsilon} \left( \int_{d(x,0)> \frac{d(\bar{x},0)}{2}} \e^{-(c(N, \lambda) - \varepsilon)2r} \e^{(N-1)r} \, {\rm d}r \right)^{\frac{p}{2}} 
\underbrace{\left(\int_{0}^{\infty} \e^{-(c(N, \lambda) - \varepsilon)\frac{2}{2-p}r} \e^{(N-1)r} \, {\rm d}r \right)^{\frac{2-p}{2}}}_{\leq \, C} \, = \, o(1)
\end{aligned}
\end{equation*}
where $o(1)$  tends to  $0$ as $R \rightarrow \infty.$ i.e., $d(x_1, x_2) \rightarrow \infty,$  in the second last step we have used the fact that
 $\frac{2}{2-p} > 2$ whenever $1 < p < 2,$ and choose $\varepsilon$ such that $(c(N, \lambda) - \varepsilon) \frac{2}{2-p} > N-1.$

\medskip 

Case 2 : $p \geq 2$
\begin{align*}
I_{2}: &= \int_{d(x,0)> \frac{d(\bar{x},0)}{2}}w\left(d(x,0)\right)^{p} w\left(d(x,\tau_{-x_{1}}(x_{2}))\right)\mathrm{~d} V_{\mathbb{B}^{N}}(x) \notag \\
& \leq \left( \int_{d(x,0)> \frac{d(\bar{x},0)}{2}} (w\left(d(x,0))\right)^{\frac{2Np}{N+2}}\mathrm{~d} V_{\mathbb{B}^{N}}(x)\right)^{\frac{N+2}{2N}} \notag \\
& \times \underbrace{ \left( \int_{\bn} \, w^{2^{\star}} 
\left(d(x,\tau_{-x_{1}}(x_{2}))\right)\mathrm{~d} V_{\mathbb{B}^{N}}(x)\right)^{\frac{1}{2^{\star}}}}_{\leq \, C} \notag\\
& \leq C_{\varepsilon} \, \left( \int_{\frac{d(\bar{x}, 0)}{2}}^{\infty} \, e^{- (c(N, \lambda) - \varepsilon) \, \frac{2Np r}{N + 2} + (N-1) r} \, {\rm d}r \right)^{\frac{N+2}{2N}} = o(1), \notag \\
\end{align*}
where in the last step we used the fact that $ (c(N, \lambda) - \varepsilon) \, \frac{2Np}{N + 2} - (N-1) \, > \, 0 $ for $p \geq 2.$  Now by choosing minimum among all finitely many $\varepsilon,$ (in step~1 and step~2) we can make all the above constants uniform. 

\medskip 

Therefore, using \eqref{3l.l} and \eqref{4j.j} we can evaluate the following:
\begin{equation*}
\begin{aligned}
& J\left(v_{1}+v_{2}\right) \\
\leq & \frac{\left(t^{2}+(1-t)^{2}\right) A+2 t(1-t)\left\langle u_{1}, u_{2}\right\rangle_{H_{\lambda}}}{\left[\left(t^{p+1}+(1-t)^{p+1}\right) A+p\left(t^{p}(1-t)+t(1-t)^{p}-o(1)\right)\left\langle u_{1}, u_{2}\right\rangle_{H_{\lambda}}\right]^{\frac{2}{p+1}}} \\
 & \leq \frac{\left(t^{2}+(1-t)^{2}\right) A}{\left(t^{p+1}+(1-t)^{p+1}\right)^{\frac{2}{p+1}} A^{\frac{2}{p+1}}}\\
 &+ \frac{2 t(1-t)\left\langle u_{1}, u_{2}\right\rangle_{H_{\lambda}}}{\left(t^{p+1}+(1-t)^{p+1}\right)^{\frac{2}{p+1}} A^{\frac{2}{p+1}}} \times \left(1+p\left(\frac{\left(t^{p}(1-t)+t(1-t)^{p}\right)}{\left(t^{p+1}+(1-t)^{p+1}\right) A}-o(1)\right)\left\langle u_{1}, u_{2}\right\rangle_{H_{\lambda}}\right)^{\frac{-2}{p+1}}.
\end{aligned}
\end{equation*}
Since $\frac{A}{A^{\frac{2}{p+1}}}=S_{1, \lambda}=2^{\frac{1-p}{1+p}} S_{2, \lambda}$ we have,\\ 
\begin{equation*}
    \begin{aligned}
J\left(v_{1}+v_{2}\right) &\leq   S_{1, \lambda} \times \frac{t^{2}+(1-t)^{2}}{\left(t^{p+1}+(1-t)^{p+1}\right)^{\frac{2}{p+1}}} \\
&+ \frac{2 t(1-t)\left\langle u_{1}, u_{2}\right\rangle_{H_{\lambda}}}{\left(t^{p+1}+(1-t)^{p+1}\right)^{\frac{2}{p+1}} A^{\frac{2}{p+1}}}\\
&\times \left(1+p\left(\frac{\left(t^{p}(1-t)+t(1-t)^{p}\right)}{\left(t^{p+1}+(1-t)^{p+1}\right) A}-o(1)\right)\left\langle u_{1}, u_{2}\right\rangle_{H_{\lambda}}\right)^{\frac{-2}{p+1}} \notag\\
& =  S_{1,\la} \, \frac{t^2 + (1-t)^2}{(t^{p+1} \, + \, (1-t)^{p+1})^{\frac{2}{p+1}}} 
   \,+\, \frac{2t(1-t)\left\langle u_{1}, u_{2}\right\rangle_{H_{\lambda}}}{(t^{p+1} \, + \, (1-t)^{p+1})^{\frac{2}{p+1}} A^{\frac{2}{p+1}}} \times \notag \\
   & \left( 1 - \frac{2p}{p+1} \frac{(t^p(1-t) + t(1-t)^p)}{(t^{p+1} + (1 -t)^{p+1})A} \, \left\langle u_{1}, u_{2}\right\rangle_{H_{\lambda}} \, + \, o(1) \right) \notag \\
  & =  S_{1, \la} \times \frac{t^{2}+(1-t)^{2}}{\left(t^{p+1}+(1-t)^{p+1}\right)^{\frac{2}{p+1}}} + \, C \, g(t)\, \langle u_1, u_2 \rangle_{H_{\lambda}} \, + \, o(\langle u_1 u_2 \rangle_{H_{\lambda}}),
    \end{aligned}
\end{equation*}
where $g(t)$ is a bounded function and $C >0$ is a constant.  Thus we obtain 

\begin{equation*}
J\left(v_{1}+v_{2}\right) \leq S_{1, \lambda} \times \frac{t^{2}+(1-t)^{2}}{\left(t^{p+1}+(1-t)^{p+1}\right)^{\frac{2}{p+1}}} + \, g(t)\, \langle u_1, u_2 \rangle _{H_{\lambda}} \, + \, o(\langle u_1 u_2 \rangle_{H_{\lambda}}).
\end{equation*}

Moreover, we have 

$$
\mbox{max} \, \frac{t^{2}+(1-t)^{2}}{\left(t^{p+1}+(1-t)^{p+1}\right)^{\frac{2}{p+1}}} < 2^{\frac{p-1}{p+1}}, \quad \forall \ t \in ([0, 1]\setminus N(\frac{1}{2})).
$$
Therefore using the above fact and letting $R \rightarrow \infty,$ we conclude 

$$
J(v_1 + v_2) \, < \,  2^{\frac{p-1}{p+1}}S_{1,\la} = S_{2, \lambda}.
$$

\end{enumerate}

This completes the proof of Lemma~\ref{lem3c}.
\end{proof}

\medskip 

\section{Proof of Theorem~\ref{maintheorem1}}\label{pf}

In this section, we shall show the existence of a solution by employing the min-max procedure in the spirit of Bahri-Li \cite{Bahri-Li}. Before going further, let us define some notations 
and functional. 
\medskip 
\begin{equation*}
\Sigma:=\left\{u \in H^{1}\left(\mathbb{B}^{N}\right):\|u\|_{\lambda}=1\right\}, \quad \Sigma^{+}:=\left\{u \in \Sigma: u \geq 0 \quad \text { a.e. in } \quad \mathbb{B}^{N}\right\}. 
\end{equation*}
\medskip 
We shall now define the {\it center of mass \rm}corresponding to $\Sigma$ on the hyperbolic space. To this end, it is worth noticing that 

$$
\dfrac{|x|}{d(x, 0)} := \dfrac{|x|}{\log \left(\frac{1+ |x|}{1- |x|} \right)} \rightarrow 
\begin{cases}
\frac{1}{2}, \quad \quad  x \rightarrow 0 \\
0, \quad \quad  x \rightarrow 1.
\end{cases}
$$\\

Moreover, the function is bounded in $\mathbb{B}^{N},$ i.e., the unit disc in $\mathbb{R}^{N}$. Therefore let $k := \sup \left\{ \dfrac{|x|}{d(x, 0)} : x \in \mathbb{R}^{N} \text{ with } |x| < 1\right \} < \infty.$ Now we are in a 
situation to define the following quantity:\\
Let $m: \Sigma \rightarrow B(0,1)$ defined as

\begin{equation*}
    m(u):=\frac{\tanh (1/2)}{\|u\|_{L^{p+1}\left(\mathbb{B}^{N}\right)}^{p+1}} \int_{\bn} \frac{x}{k\, d(x,0)}|u|^{p+1} \mathrm{~d} V_{\mathbb{B}^{N}}.
    \end{equation*}
    Clearly $|m(u)| < 1.$ As discussed in Section~2, and using elementary computation we can also conclude that $d(m(u),0) \leq 1.$ Moreover, $m$ is continuous from $\Sigma$ to $\mathbb{B}^{N}.$

\medskip 

\textbf{We can now prove Theorem \ref{maintheorem1} as follows:}

\begin{proof}

Let  $J$ as defined in \eqref{3b}, we define

\begin{equation*}
I_{z}:=\inf _{m(u)=z,\; u \in \Sigma} J(u) \quad \text{for} \quad z \in \mathbb{B}^{N} \quad \text{such that} \quad  d(z,0)<1. 
\end{equation*}
\medskip

 It is straightforward to note that if inf $_{u \in \Sigma} J(u)<S_{1, \lambda},$ then exploiting  the standard variational arguments (see \cite{MSV}), 
 the existence of a positive solution of $(P_\la)$ can be proven, which in fact, is  a constant multiple of the global minimum of the functional $J$.
 
 \medskip

Henceforth we are left with the case when $\inf _{u \in \Sigma} J(u) \geq S_{1, \lambda}$.

\medskip

If $I_{z}=S_{1, \lambda}$ for some $z \in \mathbb{B}^{N}$ with $d(z,0)<1$ then there exists some $u \in \Sigma^{+}, m(u)=z$, such that $J(u)=S_{1, \lambda}$. The theorem, under this condition, follows from the concentration compactness principle established in \cite{MSV} in the spirit of \cite{Bahri-Li}, with minor modifications needed for the hyperbolic space
and again, in this case, the solution obtained is a constant multiple of the global minimum of $J.$

\medskip

Thus we are only left with the following possibility
$$I_{z}>S_{1, \lambda}\text{ for every}\quad z \in \mathbb{B}^{N}\text{ with }d(z,0)<1.$$
We now claim the existence of some positive constant $R_{1}$ for which the following holds
$$J(w(\tau_{-y}(\bullet))) \leq S_{1, \lambda}\quad \text{for every} \quad y \in \mathbb{B}^{N} \quad \text{with} \quad d(y,0) \geq R_{1}.$$
To prove the above claim, we can observe the following by utilizing (a2):
\begin{equation*}
 \liminf\limits_{d(y,0)\rightarrow \infty}a(\tau_{y}(x)) \geq 1 \quad \mbox{for every} \ x \in \mathbb{B}^{N}.
\end{equation*}
Further, applying Fatou's lemma and the above equation yields

\begin{equation}
\begin{aligned}
    \int_{\mathbb{B}^{N}}|w(x)|^{p+1} \mathrm{~d} V_{\mathbb{B}^{N}} &\leq \int_{\mathbb{B}^{N}} \liminf\limits_{d(y,0)\rightarrow \infty}a(\tau_{y}(x))|w(x)|^{p+1} \mathrm{~d} V_{\mathbb{B}^{N}}\\
    & \leq \liminf\limits_{d(y,0)\rightarrow \infty} \int_{\mathbb{B}^{N}}  a(\tau_{y}(x))|w(x)|^{p+1} \mathrm{~d} V_{\mathbb{B}^{N}}. \label{4.ut}
    \end{aligned} 
\end{equation}

\medskip 

Thus we have, 
\begin{align*}
 \lim _{d(y,0) \rightarrow \infty} J(w(\tau_{-y}(\bullet))) & =   \frac{\|w\|_{\lambda}^{2}}{\liminf\limits_{d(y,0)\rightarrow \infty}\left(\int_{\mathbb{B}^{N}} a(\tau_{y}(x))|w(x)|^{p+1} \mathrm{~d} V_{\mathbb{B}^{N}}\right)^{\frac{2}{p+1}}}\\
 & \leq \frac{\|w\|_{\lambda}^{2}}{\left(\int_{\mathbb{B}^{N}}|w(x)|^{p+1} \mathrm{~d} V_{\mathbb{B}^{N}}\right)^{\frac{2}{p+1}}}\\
 & = J_{\infty}(w)= S_{1, \lambda}, \\
\end{align*}
  where we have used \eqref{4.ut} to get the last inequality. Hence the claim follows.
 \medskip

Fixing any $z \in \mathbb{B}^{N}$ such that $d(z,0)<1$ and $I_{z}>S_{1, \lambda}$. Consequently, $S_{1, \lambda}<\frac{1}{2}\left(I_{z}+S_{1, \lambda}\right)<I_{z}$. 
Thus, using above the claim, there exists some positive constant $\bar{R}_{1}$, such that
\begin{equation*}
I_{z}>\frac{1}{2}\left(I_{z}+S_{1, \lambda}\right)>S_{1, \lambda} \geq J(w(\tau_{-y}(\bullet))) \quad \forall y \quad \text{with} \quad d(y,0) \geq \bar{R}_{1}. 
\end{equation*}

Assume $\alpha, \alpha^{\prime}$ and $R>R_{0}$ be the same constants as were in lemma \ref{lem3c}. Choose $R_{2}>\max \left\{R^{\alpha}, \bar{R}_{1}\right\}$be very large and
\begin{equation*}
x_{2}=\left(0,0, \ldots, \tanh{\left(\frac{R_{2}-R_{2}^{\frac{\alpha^{\prime}}{\alpha}}}{2}\right)}\right).
\end{equation*} 
Define a map $h_{0}: \partial B(0,R_{2}) \rightarrow \Sigma^{+}$ as 
\begin{equation*}
h_{0}\left(x_{1}\right)=\frac{w\left(\tau_{-x_{1}}(\bullet)\right)}{\left\|w\left(\tau_{-x_{1}}(\bullet)\right)\right\|_{\la}}, \text { where } x_{1} \in \partial B(0,R_{2}).
\end{equation*}
By the choice of $R_{2}$, we have $R_{2}>\bar{R}_{1}$, so we get
\begin{equation*}
J\left(h_{0}\left(x_{1}\right)\right)<\frac{1}{2}\left(I_{z}+S_{1, \lambda}\right)<I_{z} \quad \text{for} \quad x_{1} \in \partial B(0,R_{2}). 
\end{equation*} 
Now we define another map $h^{*}: B(0,R_{2}) \rightarrow \Sigma^{+}$ by
\begin{equation*}
h^{*}\left(t x_{1}+(1-t) x_{2}\right)=\frac{t w\left(\tau_{-x_{1}}(\bullet)\right)+(1-t) w\left(\tau_{-x_{2}}(\bullet)\right)}{\left\|t w\left( \tau_{-x_{1}}(\bullet)\right)+(1-t) w\left(\tau_{-x_{2}}(\bullet)\right)\right\|_{\la}},
\end{equation*}
for $0 \leq t \leq 1,\;x_{1} \in \partial B(0,R_{2})$. It can be observed that $\left.h^{*}\right|_{\partial B(0,R_{2})}=h_{0}$.\\
Also, from Lemma \ref{lem3c}, we obtain
\begin{equation}
J\left(h^{*}(y)\right)<S_{2, \lambda} \text { for all } y \in B(0,R_{2}). \label{4e}
\end{equation}
Next, we define some min-max value. Let
\begin{equation*}
\Gamma:=\left\{h: B(0,R_{2}) \rightarrow \Sigma^{+}: h \text { is continuous, }\left.h\right|_{\partial B(0,R_{2})}=h_{0}\right\} 
\end{equation*}
and
\begin{equation*}
\mu_{0}:=\inf _{h \in \Gamma} \max _{y \in B(0,R_{2})} J(h(y)). 
\end{equation*}
We now claim that the following holds 
\begin{equation}
S_{1, \lambda}<I_{z} \leq \mu_{0}<S_{2, \lambda}. \label{4.s}
\end{equation}
Using \eqref{4e}, it is evident that $ \mu _ { 0 } < S _ { 2, \lambda } $. Furthermore, consider the map 
\begin{equation*}
m \circ h: B(0,R_{2}) \rightarrow \mathbb{B}^{N}, \text { where } h \in \Gamma .
\end{equation*}

Then one can see that 
\begin{equation*}
\lim _{R_{2} \rightarrow \infty} m \circ h_{0}\left(x_{1}\right)= \frac{x_{1}}{k d(x_{1},0)} \tanh{\left(1/2\right)} \text { uniformly for } x_{1} \in \partial B(0,R_{2}).
\end{equation*}
Thus, the above convergence immediately implies that for any $z \in \mathbb{B}^{N}$ with $d(z,0) < d(m \circ h_{0}\left(x_{1}\right),0 )$ for $x_{1} \in \partial B(0,R_{2}),$ 
 $z \notin m \circ h\left(\partial B(0,R_{2})\right)$ and hence $\operatorname{deg}\left(m \circ h,\; B(0,R_{2}),\; z\right)$
  is well defined.
\medskip

Let us define $I: \bar B(0,R_{2}) \rightarrow B(0,1)$ by $$I(x):=\frac{x}{k R_{2}} \tanh(1/2).$$ 
Moreover, for $R_{2}$ large enough, $m \circ h\left(\partial B(0,R_{2})\right)=I\left(\partial B(0,R_{2})\right)$. 
Therefore applying degree theory, for $R_{2}$ large enough, we have
\begin{equation*}
\operatorname{deg}\left(m \circ h,\; B(0,R_{2}),\; z\right)=\operatorname{deg}\left(I,\; B(0,R_{2}),\; z\right)=1.
\end{equation*}
In particular, using the solution property of degree, we can guarantee the existence of some  $y \in B(0,R_{2})$, such that, $m \circ h(y)=z$. 
Hence $\mu_{0} \geq I_{z}>S_{1, \la}$.

\medskip

From Proposition \ref{prop1}, we can conclude that $I_{\lambda,a}$ satisfies PS condition for $\frac{p-1}{2(p+1)} A < c<\frac{p-1}{p+1} A .$ Equivalently, $\left.J\right|_{\Sigma+}$ satisfies PS condition for $S_{1, \la}<c<S_{2, \la}$, i.e., $A^{\frac{p-1}{p+1}}<c<(2 A)^{\frac{p-1}{p+1}}$. Therefore, by \eqref{4.s}, $\left.J\right|_{\Sigma^{+}}$satisfies PS condition at $\mu_{0}$. As a result, using deformation lemma, $\mu_{0}$ is a critical value of $\left.J\right|_{\Sigma+}$ with some corresponding critical point $0 \not \equiv u \geq 0$ i.e., $\left.J^{\prime}\right|_{\Sigma+} (u)=0 .$ Hence we obtain a non-negative solution of $(P_\la)$ by scaling $u$. Furthermore, $u$ is a positive solution to $(P_\la)$ follows from the maximum principle. 
\end{proof}

\par\bigskip\noindent
\textbf{Acknowledgments.}
D.~Ganguly is partially supported by the INSPIRE faculty fellowship (IFA17-MA98).  
D.~Gupta is supported by the PMRF. D. Ganguly is grateful to
D. Karmakar for useful discussions.

\end{document}